\documentclass[11pt]{article}

\usepackage{fixed_locus}

\title{Compressive Domains and a Bound for the Number of
  Components of the Fixed Locus of a Self-Map of the Berkovich Line}

\author{Xander Faber \\
  IDA / Center for Computing Sciences \\
Bowie, MD \\
xander@super.org
\and
Niladri Patra \\
Indian Statistical Institute, Delhi Centre, \\
S. J. S. Sansanwal Marg, New Delhi, India. \\
niladri@math.tifr.res.in}

\begin{document}

\maketitle

\begin{abstract}
  We introduce the notion of a ``compressive domain'' for the action of a
  rational function on the Berkovich projective line over a complete
  nontrivially-valued algebraically closed nonarchimedean field. We prove that
  such a domain always contains a classical fixed point, and we leverage this
  fact to give a sharp upper bound for the number of connected components of
  the fixed locus of a rational function. We give a second proof for polynomial
  functions that uses a previously unpublished mass formula of
  Rivera-Letelier. Finally, we give an explicit formula for the crucial weight
  inside a compressive domain as a function of the number of classical fixed
  points and boundary points.
\end{abstract}



\section{Introduction}

Let $K$ be an algebraically closed field that is complete with respect to a
nontrivial nonarchimedean absolute value. Write $p \ge 0$ for the residue
characteristic of $K$. We continue our study of the locus of points of the
analytic projective line $\BerkK$ that are fixed under the action of a
nonconstant rational function $f \in K(z)$ of degree $d \ge 1$. Write
\[
   \Fix(f) = \{x \in \BerkK : f(x) = x\}.
\]
In the previous article \cite{Faber_Patra_fixed_structure}, we classified the
connected components of $\Fix(f)$ according to whether they contained classical
($K$-rational) fixed points or repelling type~II fixed points, and we proved a
number of results on the local and global structure of these components. A
study of the ``hyperbolic components'' of $\Fix(f)$ --- i.e., those components
without classical fixed points --- allowed us to conclude that $\Fix(f)$ has at
most $d+1$ connected components if $p = 0$ or $p > d$, and that $\Fix(f)$ has
at most $2d$ in the remaining cases \cite[Thm.~B]{Faber_Patra_fixed_structure}.

The primary goal for this article is to prove the sharp upper bound $d +
\lfloor d/p \rfloor + 1$ for the number of components of the fixed locus when
$0 < p \le d$. To that end, we introduce the notion of ``compressive domain''
for $f$, which is a simple domain on which $f$ is locally compressive near the
fixed boundary points. Our high-level strategy is then relatively simple:
\begin{enumerate}
  \item Prove an existence theorem for classical fixed points inside
    compressive domains (Theorem~C);
  \item Show that there is a lower bound for the number of distinct classical
    fixed points in a compressive domain $U$ in terms of the residue
    characteristic and the number of hyperbolic components in $U$
    (Proposition~\ref{prop:compressive_classical_bound}); and
  \item Conclude the proof using the standard upper bound $d+1$ for the number of
    distinct classical fixed points of a rational function of degree~$d$.
\end{enumerate}
We give a second proof of the bound for the number of components in the case of
polynomial functions, and we also exhibit polynomials of every degree~$d$ that
achieve our upper bound.

Our second goal for this article is to produce an explicit formula for Rumely's
crucial weight in a compressive domain (Theorem~E). This is a substantial
generalization of Rumely's weight formula for $\BerkK$; see
\cite[Thm.~1.3]{Faber_Patra_fixed_structure} or
\cite[Thm.~6.1]{Rumely_new_equivariant}.


\subsection{Results}

We now give a more detailed description of the main results of this paper. 

Recall that a \textbf{simple domain} is the intersection of finitely many open
disks $B_x(\vv)^- \subset \BerkK$ with type II or type III boundary points.

\begin{definition}
  \label{def:compressive}
  Let $f \in K(z)$ be a nonconstant rational function, and let $U =
  \bigcap_{i=1}^n B_{x_i}(\vv_i)^-$ be a simple domain in $\BerkK$ with
  $\partial U = \{x_1, \ldots, x_n\}$. We say that \textbf{$U$ is a compressive
    domain (for $f$)} if the following hold:
\begin{itemize}
  \item $f(x_i) = x_i$ for $i = 1, \ldots, n$; and
  \item $\vv_i \in T_{x_i}$ is a critically fixed direction for $T_{x_i} f$ for
    $i = 1, \ldots, n$.
\end{itemize}
\end{definition}

The assumption that $\partial U = \{x_1, \ldots, x_n\}$ implies that no disk
$B_{x_i}(\vv_i)^-$ is contained in any other, so that our presentation of the
simple domain is minimal. Each inward pointing direction $\vv_i$ is critically
fixed, which is equivalent to saying that $\vv_i$ is a repelling direction for
$f$. Consequently, we can think of $f$ as ``applying inward pressure'' to $U$
from its fixed points $x_1, \ldots, x_n$. Note that we allow the degenerate
case $U = \BerkK$ ($n=0$), as all of our results hold in this setting.

With this terminology, our classical fixed point theorem is easy to state:

\begin{theoremC}
  Let $f$ be a nonconstant function. A compressive domain for $f$ contains a
  classical fixed point.
\end{theoremC}

For the sake of intuition, consider a continuous map $g : B \to \RR^n$, where
$B$ is the closed unit ball. Brouwer's fixed point theorem asserts that if
$g(B) \subset B$, then $g$ admits a fixed point in $B$
\cite[Cor.~2.15]{Hatcher_AT}. If instead we only know that $g(\partial B)
\subset B$, where $\partial B$ is the boundary sphere of $B$, then composing
Brouwer's result with a retraction-to-the-sphere argument again produces a
fixed point in $B$. Theorem~C is more like the latter result in that we are
imposing a boundary behavior condition rather than a self-mapping
condition. Indeed, when our compressive domain $U$ has at least two boundary
points, one can show that $f(U) = \BerkK$. 

Regarding the novelty of Theorem~C, when $U$ is an open disk with $f(U) \subset
U$, our fixed-point result is an elementary consequence of a well known result
on attracting fixed points from nonarchimedean dynamics
\cite[Thm.~10.86]{Baker-Rumely_BerkBook_2010}. When $U$ is an open disk with
$f(U) = \BerkK$, Theorem~C follows immediately from Rumely's First
Identification Lemma \cite[Lem.~2.1]{Rumely_new_equivariant}. The case where
$U$ has at least two boundary points appears to be entirely new to the
literature.

\begin{theoremD}
  Let $f \in K(z)$ be a rational function of degree $d \geq 2$, and let $p$ be
  the residue characteristic of $K$. If $0 < p \le d$, then the number of
  connected components of the fixed locus of $f$ is at most
  \[
     d + \left\lfloor \frac{d}{p}\right\rfloor + 1,
     \]
  where $\lfloor \alpha \rfloor$ denotes the largest integer less than or equal
  to $\alpha$.
\end{theoremD}

The second author previously showed that the stronger bound $d+2$ can be
obtained when $f$ has potential good reduction
\cite{Patra_fixed}. Rivera-Letelier suggested to the authors that the number of
components of the fixed locus of a polynomial map could be bounded by applying
an unpublished mass formula for repelling fixed points. We state and prove his
mass formula in Section~\ref{sec:polynomials}, and we recover the bound in
Theorem~D for polynomials. The proof sheds additional light on the structure of
the fixed locus for polynomials.  In Section~\ref{sec:examples}, we give
polynomial examples to show that the bounds in Theorem~B of the previous paper
and Theorem~D above are sharp for every nonarchimedean field $K$ and every
degree~$d$. The authors were surprised to find that polynomial maps are
flexible enough to exhibit the behaviors necessary to achieve these bounds in
all cases.

Next, we go further in our analysis of compressive domains to give an explicit
formula for how much of Rumely's crucial weight lives inside it. (See
\cite{Rumely_new_equivariant} or \cite[\S8]{Faber_Patra_fixed_structure} for a
definition of the crucial weight.)

\begin{theoremE}
  \label{prop:compressed_crucial_weight}
  Let $f$ be a nonconstant rational function and $U$ a compressive
  domain. Suppose that $U$ has $n$ boundary points and contains $\bar c$
  classical fixed points for $f$, counted according to fixed-point
  multiplicity. Writing $\wR(x)$ for the crucial weight of a point $x$, we have
  \[
  \sum_{x \in U} \wR(x) = n + \bar c - 2.
  \]
\end{theoremE}

In the case where $U = \BerkK$, this is precisely Rumely's Weight Formula
\cite[Thm.~1.3]{Faber_Patra_fixed_structure}. Our proof of Theorem~E uses a
careful analysis of the combinatorial structure of the boundary points of $U$,
the classical fixed points, and the repelling type~II fixed points.  Since
Rumely deduces his weight theorem using potential theory, we assume there is
another proof of Theorem~E that proceeds along those lines.

Theorem~E admits an interesting connection to two dynamical equidistribution
theorems. Recall that there is a canonical probability measure $\mu_f$ on
$\BerkK$ determined by the two conditions (1) $f^* \mu_f = d\, \mu_f$, and (2)
$\mu_f$ does not charge classical points of $\BerkK$. (See, for example,
\cite[Thm.~10.2]{Baker-Rumely_BerkBook_2010},
\cite[\S4]{Chambert-Loir_Measures_2005}, or
\cite[Thm.~A]{Favre_Rivera-Letelier_Ergodic_2010}.) We can approximate the
measure $\mu_f(U)$ in two different ways. For simplicity, assume that $f$ does
not have potential good reduction, so that $\mu_f$ does not charge any point of
$\BerkK$.

Write $f^m$ for the $m$-fold iterate of $f$. At least when $K$ has
characteristic zero, a consequence of
\cite[Thm.~B]{Favre_Rivera-Letelier_Ergodic_2010} is that
\begin{equation}
  \label{eq:canonical_equidistribution}
   \lim_{m \to \infty} \frac{1}{d^m + 1} \sum_{\substack{f^m(x) = x \\ x \in
       U(K)}} 1 = \mu_f(U),
\end{equation}
where we count the periodic points on the left side according to their
multiplicity as a solution to the equation $f^m(z) = z$. Intuitively, the
proportion of periodic points of period~$m$ inside $U$ converges to the
canonical measure of $U$.

Similarly, a consequence of \cite[Thm.~2]{Jacobs_crucial_equidistribution} is that
\begin{equation}
  \label{eq:crucial_equidistribution}
   \lim_{m \to \infty} \frac{1}{d^m - 1} \sum_{x \in U} w_{\mathrm{R}, f^m}(x) = \mu_f(U),
\end{equation}
where $w_{\mathrm{R}, f^m}(U)$ is the crucial weight of the set $U$ associated
to the iterate $f^m$. Equivalently, the proportion of crucial points for $f^m$
in $U$ converges to the canonical measure of $U$. Comparing
\eqref{eq:canonical_equidistribution} and \eqref{eq:crucial_equidistribution},
we see that
\begin{equation}
  \label{eq:crucial_classical}
\sum_{x \in U} w_{\mathrm{R}, f^m}(x) =
\sum_{\substack{f^m(x) = x \\ x \in U(K)}} 1 + o(d^m) \qquad (m \to \infty).
\end{equation}
Noting that $U$ is compressive for $f^m$ if it is compressive for $f$, Theorem~E asserts that
\[
\sum_{x \in U} w_{\mathrm{R}, f^m}(U) =
n  + \left(\sum_{\substack{f^m(x) = x \\ x \in U(K)}} 1\right) - 2 \qquad (m \to \infty).  
\]
That is, the $o(d^m)$ error term in \eqref{eq:crucial_classical} is the uniform
constant $n - 2$, where $n$ is the number of boundary points for $U$.
 


\subsection{Notation and conventions}

We keep the notation and conventions of the prequel
\cite[\S1.2]{Faber_Patra_fixed_structure}. We write $p$ for the residue
characteristic of $K$. If $S$ is a set of points in $\BerkK$, we write
$\partial S$ for its topological boundary. The \textbf{crucial weight} of $S$
is the sum of the crucial weight of the points in $S$
\cite[\S8]{Faber_Patra_fixed_structure}.

If $x$ is a type~II or type~III fixed point for $f$, we write $\Ncf(f,x)$ for
the number of critically fixed directions for the tangent map $T_x f$. Note
that $\Ncf(f,x) = 0$ when $x$ is of type~III.  Similarly, we define $\Nshear
(f,x)$ to be the number of shearing directions at a type~II or type~III point
$x \in \BerkK$. Again, $\Nshear(f,x) = 0$ if $x$ is of type~III. For $X \subset
\BerkK$, we define
\[
\Ncf(f,X) = \sum_{x \in X} \Ncf(f,x) \quad \text{and} \quad
\Nshear(f,X) = \sum_{x \in X} \Nshear(f,x).
\]


\section{A fixed point theorem for simple domains}
\label{sec:compressive}

Our goal for this section is to prove Theorem~C. 

Let $U$ be a compressive domain for $f$, where $U = \bigcap B_{x_i}(\vv_i)^-$
as in Definition~\ref{def:compressive}.  Using Rumely's First Identification
Lemma, we can count the number of classical fixed points inside $U$ in terms of
the surplus multiplicities of the disks $B_{x_i}(\vv_i)^-$. This is the content
of the next result. Ideally, a proof of Theorem~C would argue that our explicit
count is positive; we were unable to make progress on this
approach. Nevertheless, the count is interesting in its own right.

\begin{proposition}
  \label{prop:compressive_count}
  Let $f$ be a rational function of degree $d \ge 1$, and let $U =
  \bigcap_{i=1}^n B_{x_i}(\vv_i)^-$ be a compressive domain for $f$. Write
  $s_f(\vv_i)$ for the surplus multiplicity of the direction $\vv_i$. The
  number of classical fixed points in $U$ is
  \[
     1 + \sum_{i=1}^n s_f(\vv_i) - d(n-1).
  \]
\end{proposition}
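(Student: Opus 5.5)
Rumely's First Identification Lemma \cite[Lem.~2.1]{Rumely_new_equivariant} says the following. Let $x$ be a type~II or~III point fixed by $f$, and let $\vv \in T_x$ be a direction fixed by $T_x f$. Then the number of classical fixed points in $B_x(\vv)^-$, counted with multiplicity, equals $s_f(\vv) + 1$ if $\vv$ is critically fixed, and equals $s_f(\vv)$ otherwise. Since each $\vv_i$ is critically fixed by hypothesis, the disk $D_i = B_{x_i}(\vv_i)^-$ contains exactly $s_f(\vv_i)+1$ classical fixed points. The plan is to do inclusion–exclusion on these disks: express $U$ through the complements of the $D_i$, and then use the total count of $d+1$ classical fixed points on $\BerkK$.

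The key combinatorial step concerns the complements $E_i = \BerkK \smallsetminus D_i$. Each $E_i$ is a closed disk containing $x_i$. I claim the $E_i$ are pairwise disjoint. Because the presentation of $U$ is minimal ($\partial U = \{x_1,\dots,x_n\}$), no $D_i$ contains another, and every $x_j$ with $j \ne i$ lies in the closure of $U$, hence in $\overline{D_i}$. Since $x_j \ne x_i$, this puts $x_j$ in $D_i$. For two such disks, $D_i \cup D_j = \BerkK$ whenever $x_j \in D_i$ and $x_i \in D_j$. To see this, look along the segment $[x_i,x_j]$: $D_i$ is the component of $\BerkK \smallsetminus \{x_i\}$ containing $x_j$, and $D_j$ is the component of $\BerkK \smallsetminus\{x_j\}$ containing $x_i$, so their union is everything. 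It follows that $E_i \cap E_j = \emptyset$. The degenerate case $n=0$ gives $U = \BerkK$ and the count $1+d$, which is consistent with the formula.

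The count then follows. The complement $\BerkK \smallsetminus U$ is the disjoint union of the $E_i$. The classical fixed points in $E_i$ number $(d+1) - (s_f(\vv_i)+1) = d - s_f(\vv_i)$; none of them is the type~II/III point $x_i$ itself. The number of classical fixed points in $U$ is therefore
\[
(d+1) - \sum_{i=1}^n \bigl(d - s_f(\vv_i)\bigr) = 1 + \sum_{i=1}^n s_f(\vv_i) - d(n-1).
\]
Here the count is with multiplicity, matching the convention of the Identification Lemma, and it assumes $f(z) \ne z$ so that the fixed-point count $d+1$ is finite.

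I expect the main obstacle to be the disjointness argument. One has to check carefully, using the tree structure of $\BerkK$, that minimality forces $x_j \in D_i$ rather than $x_j$ merely lying on the boundary. One must also make sure the form of the First Identification Lemma being invoked matches the definitions of surplus multiplicity and critically fixed direction in use here (rather than an “$s + \text{(multiplicity)}$” variant). If the Lemma is phrased with an extra term for the directional multiplicity, I would instead verify the $+1$ for critically fixed directions directly. To do that, I would reduce the tangent map $T_{x_i} f$ and count the fixed points of the reduction in that direction.
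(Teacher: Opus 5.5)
Your proof is correct and follows the paper's argument. The paper also applies Rumely's First Identification Lemma to each critically fixed (hence simply fixed) direction $\vv_i$ to get $1+s_f(\vv_i)$ classical fixed points in $B_{x_i}(\vv_i)^-$, and then subtracts the complements from the total $d+1$. Your explicit check that the complements $\BerkK \smallsetminus B_{x_i}(\vv_i)^-$ are pairwise disjoint supplies a step the paper uses without comment when it sums over $i$.

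One small slip: the ``otherwise'' clause in your statement of the Identification Lemma is wrong. A fixed direction that is not critically fixed contributes $s_f(\vv)+\tilde F_f(\vv)$, not $s_f(\vv)$. You only invoke the critically fixed case, where $\tilde F_f(\vv_i)=1$, so the proof is unaffected.
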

    
\begin{proof}
   If $n = 0$, then $U = \BerkK$ and $f$ admits $d+1$ classical fixed
   points. In what remains, we assume that $n \ge 1$.
        
   Fix an index $i \in \{1, \ldots, n\}$. Since $x_i$ is fixed and $\vv_i$ is
   not a multiple fixed point for $T_{x_i} f$, Rumely's First Identification
   Lemma shows that $B_{x_i}(\vv_i)^-$ contains $1 + s_f(\vv_i)$ classical
   fixed points. Thus, the number of classical fixed points in the complement
   $\BerkK \smallsetminus B_{x_i}(\vv_i)^-$ is
   \[
     (d+1) - (1 + s_f(\vv_i)) = d - s_f(\vv_i).
   \]
   Summing over all indices $i$, we find that $\BerkK \smallsetminus U$
   contains $\sum_{i=1}^{n} (d - s_f(\vv_i))$ classical fixed points. We
   conclude that the number of classical fixed points in $U$ is
   \[
       (d+1) - \sum_{i=1}^n \big(d - s_f(\vv_i)\big) = 1 + \sum_{i=1}^n
   s_f(\vv_i) - d(n-1). \qedhere
   \]
\end{proof}

We say that a compressive domain $U$ is \textbf{minimal} if any compressive
domain $V \subset U$ for $f$ satisfies $V = U$.
    
\begin{lemma}
  \label{lem:fixed_implies_classical}
  Let $U$ be a minimal compressive domain for a rational function $f$. If $U$
  contains a fixed point for $f$, then it contains a classical fixed point for
  $f$.
\end{lemma}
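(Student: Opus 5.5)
Suppose $U$ contains a fixed point $y$. If $y$ is classical we are done, so assume $y$ is of type~II, III, or IV. I will find a compressive domain $V\subset U$ with $V\neq U$, contradicting minimality, unless a classical fixed point is found along the way.

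First I use the structure of the fixed locus from \cite{Faber_Patra_fixed_structure}. Let $C$ be the connected component of $\Fix(f)$ containing $y$. If $C$ contains a classical fixed point, I want that point to lie in $U$. Suppose $C$ leaves $U$. Then $C$ meets some boundary point $x_i$, and the path from $y$ to $x_i$ is fixed pointwise. Near $x_i$ this path runs along the direction $\vv_i$. But $\vv_i$ is critically fixed, hence repelling, and a fixed segment emanating from $x_i$ in a direction requires that direction to be an indifferent (non-critical, multiplicity-one) fixed direction of $T_{x_i}f$. This is a contradiction. So $C\subset U$, and if $C$ contains a classical point we are done.

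Otherwise $C$ is a hyperbolic component. From the prequel, a hyperbolic component contains a repelling type~II fixed point, or at least a type~II fixed point $z\in U$ with a critically fixed (repelling) direction. The cleanest case is a type~II fixed point $z\in C$ with some critically fixed direction $\ww$ at $z$, chosen so that $B_z(\ww)^-\cap U$ avoids the boundary directions. Concretely, I take $\ww$ not pointing toward any $x_i$. Then $V = B_z(\ww)^-$ is a simple domain with fixed boundary point $z$ and critically fixed inward direction, so it is compressive. Moreover $V\subsetneq U$, which contradicts minimality.

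The main obstacle is producing such a $\ww$: a critically fixed direction at a point of $C$ that points away from $\partial U$ and from the rest of $C$. If every critically fixed direction at $z$ points toward some $x_i$ or along $C$, I would instead form
\[
V=\bigcap_{\text{such } \ww} B_z(\ww)^- \cap \bigcap_{j} B_{x_j}(\vv_j)^-,
\]
trimming the directions appropriately so that $z$ becomes a boundary point of $V$ with critically fixed inward direction. This still gives a proper compressive subdomain. To handle this, I would invoke the prequel's count relating the number of critically fixed directions at repelling fixed points of a hyperbolic component to its valence. This is the step I expect to require the most care, since directions along $C$ are indifferent rather than critical.
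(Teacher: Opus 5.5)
Your outline is the paper's. The component $C$ of $\Fix(f)$ through the given point stays in $U$ because each $\vv_i$ is repelling, so if $C$ has no classical point it is hyperbolic. A critically fixed direction at a repelling point of $C$ is then used to contradict minimality. However, you never establish the fact the argument rests on: that some point of $C$ has a critically fixed direction. Your claim that ``a hyperbolic component contains a repelling type~II fixed point, or at least a type~II fixed point with a critically fixed direction'' is a hedge. Its first alternative is insufficient, since a repelling point need not have a critically fixed direction. The count you mention at the end is the right tool, but it is needed for existence, not for your ``obstacle.'' The paper gets existence from Theorem~A of the prequel. Since $C$ contains no classical fixed points, $0 = 2 + \sum_y \big(\deg_f(y) - 1 - \Ncf(f,y)\big)$, summing over the type~II repelling points $y \in C$. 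Hence $\sum_y \Ncf(f,y) = 2 + \sum_y (\deg_f(y) - 1) \ge 2$. Equivalently, as the paper puts it: if no repelling point of $C$ had a critically fixed direction, $C$ would contain at least three classical fixed points.

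Second, what you call the main obstacle is not one, and your proposed fix is wrong as written. Distinct directions $\ww \ne \ww'$ at $z$ give disjoint open balls, so your intersection over all such $\ww$ is empty once there are two of them. No valence count will produce a $\ww$ avoiding $\partial U$ either. Instead, take any single critically fixed direction $\ww$ at $z$ and set $V = B_z(\ww)^- \cap U$, the component of $U \smallsetminus \{z\}$ in direction $\ww$. If $x_j \notin B_z(\ww)^-$, then $B_z(\ww)^- \subset B_{x_j}(\vv_j)^-$, because the ball is connected, misses $x_j$, and has $z \in B_{x_j}(\vv_j)^-$ in its closure. So $V$ is the simple domain with boundary $\{z\} \cup (\partial U \cap B_z(\ww)^-)$. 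Every boundary point is fixed with a critically fixed inward direction, and $z \in U \smallsetminus V$. Thus $V$ is a proper compressive subdomain, contradicting minimality; this is exactly the paper's step. In the subcase $B_z(\ww)^- \subset U$ (your ``cleanest case''), the paper instead uses Rumely's First Identification Lemma to find a classical fixed point in $B_z(\ww)^- \subset U$. Your appeal to minimality works equally well there. Finally, a critically fixed direction at $z$ never meets $C$, since points near $z$ in such a direction are not fixed. So nothing needs arranging about pointing ``away from the rest of $C$.''
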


\begin{proof}
  Suppose that $U$ contains no classical fixed point for $f$. By hypothesis,
  there is a fixed point in $U$; let $X$ be the component of the fixed locus of
  $f$ containing this point. Since $f$ is locally repelling at $x_i$ in the
  direction pointing into $U$, the component $X$ is contained entirely inside
  $U$. As $X$ contains no classical fixed point, it is a hyperbolic component.
        
  Let $x \in X$ be a repelling fixed point. Suppose that $\vv \in T_x$ is a
  critically fixed direction under $T_x f$. If $B_{x}(\vv)^{-}$ contains a
  boundary point of $U$, then $V$ is a compressive domain for $f$, where $V$ is
  the component of $U \smallsetminus \{x\}$ contained in $B_{x}(\vv)^{-}$. This
  contradicts the minimality of $U$. Thus, $B_x(\vv)^-$ does not contain a
  boundary point of $U$, which forces $B_x(\vv)^- \subset U$. Rumely's First
  Identification Lemma implies that a classical fixed point of $f$ lies in
  $B_x(\vv)^-$, which again is a contradiction. We conclude that there is no
  critically fixed direction at $x$. Since $x$ was arbitrary, no repelling
  fixed point of $X$ admits a critically fixed direction. From
  \cite[Thm.~A]{Faber_Patra_fixed_structure}, $X$ contains at least~3 classical
  fixed points, which is absurd. This final contradiction establishes the
  lemma.
\end{proof}
    
We use the term \textbf{metric graph} (also called a \textbf{metrized graph})
in the sense of \cite[Ch.~3]{Baker-Rumely_BerkBook_2010}. In particular, such a
graph has finite length. A \textbf{metric tree} is a metric graph whose
underlying topological space is contractible. A \textbf{leaf} of a metric graph
is a point with a unique tangent direction.
    
\begin{lemma}
  \label{lem:metric_fixed_pt}
  Let $\Gamma$ be a metric tree with positive length and leaves $x_1, \ldots,
  x_n$.  Suppose $\varphi : \Gamma \to \Gamma$ is a self-map of $\Gamma$ with
  the following properties:
  \begin{itemize}
      \item $\varphi$ is continuous and piecewise affine;
      \item $\varphi(x_i) = x_i$ for $i = 1, \ldots, n$; and 
      \item the derivative of $\varphi$ at $x_i$ (in the unique direction
        pointing into $\Gamma$) is strictly greater than~1 for $i = 1, \ldots,
        n$.
  \end{itemize}
  Then $\varphi$ admits a fixed point in $\Gamma \smallsetminus \{x_1, \ldots, x_n\}$.
\end{lemma}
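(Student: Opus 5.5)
We argue by induction on the number of edges of $\Gamma$ in a combinatorial model (vertices at the leaves, at the branch points, and at enough points that $\varphi$ is affine on each edge). The key one-dimensional input is the intermediate value theorem. Here is the base case. If $\Gamma$ is a segment $[x_1,x_2]$ (so $n=2$), identify it with $[0,L]$ so that $x_1 = 0$ and $x_2 = L$. Consider $h(t) = \varphi(t) - t$. The derivative condition at $x_1$ gives $h(t) > 0$ for small $t>0$. Likewise, at $x_2$ the inward derivative exceeds~1, so $\varphi(L-s) < L - s$ for small $s>0$, and hence $h<0$ near $L$. By the intermediate value theorem $h$ vanishes at some interior point. (The case $n=1$ cannot occur for a tree of positive length, since a finite tree with at least one edge has at least two leaves.)

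For the general case we use a retraction, in the spirit of the Brouwer argument recalled in the introduction. Choose a leaf, say $x_n$, and let $e$ be the edge of $\Gamma$ containing $x_n$, running from $x_n$ to a vertex $y$. Consider the path $\gamma$ from $x_n$ to $x_1$, viewed as a segment. Let $r : \Gamma \to \gamma$ be the canonical retraction onto the path. Then $\psi = r\circ\varphi|_\gamma : \gamma \to \gamma$ is continuous and piecewise affine, fixes both endpoints, and near each endpoint agrees with $\varphi$. This last point holds because $\varphi$ maps a small neighborhood of $x_i$ into the edge at $x_i$, by continuity and the fact that $x_i$ is fixed; so the derivatives are still $>1$. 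The base case produces an interior point $t\in\gamma$ with $r(\varphi(t)) = t$.

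The main obstacle is that $r(\varphi(t))=t$ does not force $\varphi(t)=t$: $\varphi(t)$ could lie in a branch of $\Gamma\smallsetminus\gamma$ hanging off $t$. To handle this I would instead run a degree/counting argument directly. For each edge direction, define the sign of the displacement of $\varphi(t)$ relative to $t$ (pointing toward which direction at $t$) along the complement of the fixed set. Suppose $\varphi$ had no interior fixed point. Then the map $t\mapsto$ (direction at $t$ in which $\varphi(t)$ lies) is locally constant along each open edge. It thus defines an orientation of each edge, or at a vertex a choice of one tangent direction. The boundary hypotheses say that edges at leaves point inward, away from the leaf. At each vertex, the chosen direction at $v$ is an outgoing edge, and continuity forces the adjacent edge in that direction to be oriented away from $v$. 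Moreover, every edge oriented into $v$ must be consistent with $v$'s choice being different. Following the orientation produces a directed path that never revisits a vertex, since $\Gamma$ is a tree. It must terminate, and it can only terminate at a leaf. But every leaf edge points away from its leaf, a contradiction. Making the continuity step rigorous is the delicate part. What must be shown is that if $\varphi(t)$ lies in direction $\vec w$ at points $t$ approaching $v$ along edge $e$ from direction $\vec w$, then the edge is oriented toward $v$. I would phrase this with the retraction to $e$ and local affineness. This yields the fixed point in $\Gamma\smallsetminus\{x_1,\dots,x_n\}$.
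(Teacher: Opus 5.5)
Your final argument is correct, and it takes a genuinely different route from the paper's.

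\textbf{Your route.} Once you drop the path retraction, what remains is a direct, non-inductive orientation count. If no non-leaf point is fixed, the side of $t$ containing $\varphi(t)$ orients every edge. Each interior vertex $v$ then has exactly one outgoing edge, the one toward $\varphi(v)$. The derivative hypothesis makes each leaf edge outgoing from its leaf. So every vertex has out-degree one, which is impossible because a tree has $\#E=\#V-1$. Your non-backtracking walk is an equivalent way to say this. The opening induction and the segment base case are subsumed by this count and can be deleted.

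\textbf{The paper's route.} The paper keeps the retraction idea you started with and resolves exactly the obstacle you identified. It inducts on the number of leaves and splits $\Gamma$ at a single branch point $y$ into subtrees $\Gamma_1,\Gamma_2,\Gamma_3$. It then retracts onto $\Gamma_1\cup\Gamma_2$, and afterwards onto $\Gamma_2\cup\Gamma_3$, rather than onto a leaf-to-leaf path. A spurious fixed point of a retracted map can then only occur at $y$. If it occurs for both retractions, then $\varphi(y)$ lies in both $\Gamma_3\smallsetminus\{y\}$ and $\Gamma_1\smallsetminus\{y\}$, which is absurd.

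\textbf{What each buys.} The paper's route keeps the intermediate value theorem as its only analytic input and mirrors the Brouwer-plus-retraction heuristic from the introduction. Yours avoids induction and uses piecewise affineness only to orient the leaf edges. It also makes the statement a transparent combinatorial fact about trees.

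\textbf{One correction.} As written, your sentence describing the ``delicate'' step has its conclusion reversed. Take $t$ on the edge at $v$ in direction $\vec w$, the direction of $\varphi(v)$. Then $\varphi(t)$ lies beyond $t$, so that edge is oriented away from $v$, as you said earlier. It is the edges at $v$ in the other directions that are oriented toward $v$.

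Both facts need only continuity, not a retraction or local affineness. Let $\delta>0$ be the distance from $v$ to $\varphi(v)$. For all $t$ sufficiently close to $v$ (in particular within $\delta/2$), $\varphi(t)$ lies in the component of $\Gamma\smallsetminus\{v\}$ in direction $\vec w$, at distance greater than $\delta/2$ from $v$. This settles both orientations at once.
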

    
\begin{proof}
  We proceed by induction on $n$. Suppose first that $n = 2$. Then $\Gamma =
  [x_1, x_2]$ is a segment, and there is an isometry $\Gamma \simeq [0,\ell]$
  for some $\ell > 0$. Under this isometry, our self-map of $\Gamma$
  corresponds to a continuous piecewise affine function $g : [0,\ell] \to
  [0,\ell]$ with
  \[
  g(0) = 0, \quad g(\ell) = \ell, \quad g'(0) > 1, \quad g'(\ell) > 1.
  \]
  Here we have abused notation by writing $g'$ for the 1-sided derivatives at
  $0$ and $\ell$, as there is no ambiguity. The Intermediate Value Theorem
  shows that $g(x) - x$ has a zero in $(0,\ell)$, and hence $g$ has a fixed
  point in this interval.

  Suppose now that the result holds for every such tree with at most $n \ge 2$
  leaves, and let us assume that $\Gamma$ has $n+1$ leaves. In particular, this
  means $\Gamma$ has at least one point $y$ with valence strictly larger
  than~2. We may decompose $\Gamma$ as
  \[
     \Gamma = \Gamma_1 \cup \Gamma_2 \cup \Gamma_3, 
  \]
  where each $\Gamma_i$ is a metric tree with positive length and $\Gamma_i
  \cap \Gamma_j = \{y\}$ for $i \ne j$. Then any leaf of $\Gamma_{i}$ is either
  $y$ or a leaf of $\Gamma$. For convenience, define $\Gamma_{ij} = \Gamma_i
  \cup \Gamma_j$. Observe that for $i \neq j$, any leaf of $\Gamma_{ij}$ must
  be a leaf of $\Gamma$. As a consequence, each $\Gamma_{ij}$ has at most $n$
  leaves.
        
  Define $r_{12} : \Gamma \to \Gamma_{12}$ to be the canonical retraction from
  $\Gamma$ to $\Gamma_{12}$. Concretely, $r_{12}(x) = x$ for $x \in
  \Gamma_{12}$ and $r_{12}(x) = y$ for $x \in \Gamma_3$. Write $\varphi_{12} :
  \Gamma_{12} \to \Gamma$ for the restriction of $\varphi$ to
  $\Gamma_{12}$. The composition $r_{12} \circ \varphi_{12}$ is a continuous
  and piecewise affine self-map of $\Gamma_{12}$, and it agrees with $\varphi$
  in a neighborhood of each leaf of $\Gamma_{12}$. In particular, it satisfies
  all of the hypotheses of the proposition. By our induction hypothesis, the
  map $r_{12} \circ \varphi_{12}$ has a fixed point $x$ that is not a leaf of
  $\Gamma_{12}$. If $x \in \Gamma_{12} \smallsetminus \{y\}$, then $\varphi(x)
  = x$, and we are finished. Otherwise, $y$ is fixed by $r_{12} \circ
  \varphi_{12}$. By the definition of the retraction, we conclude that
  $\varphi(y) \in \Gamma_3$. Again, if $\varphi(y) = y$, we are done. So we may
  assume that $\varphi(y) \in \Gamma_3 \smallsetminus \{y\}$.
        
  Now we make the same argument again, replacing $\Gamma_{12}$ with
  $\Gamma_{23}$. Then we either find a fixed point of $\varphi$ in
  $\Gamma_{23}$, or else we see that $\varphi(y) \in \Gamma_1 \smallsetminus
  \{y\}$. The latter cannot hold since $\varphi(y) \in \Gamma_3 \smallsetminus
  \{y\}$. The proof is complete.
\end{proof} 
    
\begin{proof}[Proof of Theorem~C]
  If $U$ is not a minimal compressive domain, then replace it with a
  compressive domain $V$ with $V \subsetneq U$. Repeat as necessary until $U$
  is minimal. Note that this process terminates because there are only finitely
  many repelling type~II fixed points for $f$
  \cite[Cor.~6.2]{Rumely_new_equivariant}.
       
  If $U = \BerkK$, the result is evident. If instead $U$ is a disk, the result
  is immediate from Rumely's First Identification Lemma. Suppose for the
  remainder of the proof that $U$ has at least two boundary points. Write
  $\Gamma$ for the connected hull of $x_1, \ldots, x_n$, i.e., the smallest
  connected subset of $\BerkK$ containing $x_{1}, \ldots, x_{n}$.  Write $r :
  \BerkK \to \Gamma$ for the retraction to $\Gamma$. Then $r \circ f$ is
  continuous and piecewise affine for the path distance metric on $\Gamma$
  \cite[Cor.~9.21]{Baker-Rumely_BerkBook_2010}. The fact that $U$ is a 
  compressive domain shows that $r \circ f$ satisfies the hypotheses of
  Lemma~\ref{lem:metric_fixed_pt}. It follows that there is a point $x \in
  \Gamma \smallsetminus \{x_1, \ldots, x_n\}$ such that $r \circ f(x) = x$. If
  $f(x) = x$, then Lemma~\ref{lem:fixed_implies_classical} shows there is a
  classical fixed point in $U$. If instead $f(x) \ne x$, then there is a
  direction $\vv \in T_x$ such that $B_x(\vv)^- \subset U$ and $f(x) \in
  B_x(\vv)^-$. Rumely's Second Identification Lemma
  \cite[Lem.~2.2]{Rumely_new_equivariant} shows that $B_x(\vv)^-$ contains a
  classical fixed point of $f$.
\end{proof}

Combining Theorem~C with Proposition~\ref{prop:compressive_count} immediately
yields the following estimate for the sum of the surplus multiplicities of the
inward directions of a compressive domain.

\begin{corollary}
  Let $f$ be a rational function of degree $d \ge 1$, and let $U =
  \bigcap_{i=1}^n B_{x_i}(\vv_i)^-$ be a compressive domain for $f$. Write
  $s_f(\vv_i)$ for the surplus multiplicity of the direction $\vv_i$. Then
  \[
     \sum_{i=1}^n s_f(\vv_i) \ge  d(n-1).
   \]
\end{corollary}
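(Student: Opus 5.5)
This follows directly from combining Proposition~\ref{prop:compressive_count} with Theorem~C, as the sentence before the statement indicates. First, Proposition~\ref{prop:compressive_count} gives an exact count: the number of classical fixed points of $f$ in $U$, counted with fixed-point multiplicity, equals
\[
  1 + \sum_{i=1}^n s_f(\vv_i) - d(n-1).
\]
Second, Theorem~C says that the compressive domain $U$ contains at least one classical fixed point. Since multiplicities are positive integers, the count above is therefore at least $1$. Subtracting $1$ and rearranging gives $\sum_{i=1}^n s_f(\vv_i) \ge d(n-1)$.

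The only point to check is that the two results count the same quantity. The count in Proposition~\ref{prop:compressive_count} comes from Rumely's First Identification Lemma and from the total $d+1$ classical fixed points of $f$ on $\BerkK$. Both of these count with multiplicity. A single classical fixed point found by Theorem~C contributes at least $1$ to such a count, so the inequality holds regardless of multiplicity.

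It is also worth making sure the subtraction in the proof of Proposition~\ref{prop:compressive_count} does not double count. The complements $\BerkK \smallsetminus B_{x_i}(\vv_i)^-$ are pairwise disjoint because the presentation of $U$ is minimal ($\partial U = \{x_1,\dots,x_n\}$), so that proof is valid as stated. The degenerate case $n=0$ is trivial, since both sides of the inequality are $0$. There is no real obstacle here, because all the substantive work is already contained in Theorem~C.
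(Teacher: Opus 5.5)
Your proof is correct and is exactly the paper's argument: the paper derives this corollary by combining the exact count $1 + \sum_{i=1}^n s_f(\vv_i) - d(n-1)$ from Proposition~\ref{prop:compressive_count} with Theorem~C's guarantee of at least one classical fixed point in $U$. One small slip: when $n=0$ the right-hand side is $d(0-1) = -d$, not $0$; the inequality is still trivially true there, and the general argument already covers this case.
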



\section{Bounding the number of hyperbolic components}
\label{sec:hyperbolic}

Recall that a hyperbolic component can only be present in $\Fix(f)$ if $p =
\reschar(K)$ is positive \cite[Thm.~B]{Faber_Patra_fixed_structure}. The
presence of hyperbolic components forces a kind of separation of the classical
fixed points. We give two preliminary results in this vein and then deduce
Theorem~D.

\begin{lemma}
  \label{lem:critically_fixed_bound}
  Suppose that $X$ is a hyperbolic component of $\Fix(f)$. Then $\Ncf(f,X) \ge
  p+1$.
\end{lemma}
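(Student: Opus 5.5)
Here $X$ is a hyperbolic component, and we want $\Ncf(f,X) \ge p+1$. The plan is to combine the structural results of the prequel on hyperbolic components with a local count of critically fixed directions at a repelling type~II fixed point, using reduction in characteristic $p$.

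First, by \cite[Thm.~A]{Faber_Patra_fixed_structure} (as in the proof of Lemma~\ref{lem:fixed_implies_classical}), $X$ contains a repelling type~II fixed point. Moreover, some repelling fixed point of $X$ must admit a critically fixed direction, since otherwise $X$ would contain classical fixed points. I would rather work with the leaves of $X$. The component $X$ is a finite tree whose endpoints are repelling type~II points, because $X$ contains no classical point. Let $x$ be such a leaf.

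Change coordinates so that $x = \zeta_{0,1}$. Then the tangent map $T_x f$ is the reduction $\bar f \in k(z)$ of degree $\bar d = \deg_x(f) \ge 2$, where $k$ is the residue field.

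The key step is to show that the fixed points of $\bar f$ on $\PP^1(k)$ are either critically fixed directions or multiple fixed directions. A multiple fixed direction is one along which the fixed locus continues, i.e.\ a shearing direction, or a direction into $X$ (tangent to $X$). Since $x$ is a leaf, at most one direction at $x$ lies in $X$.

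Now use \cite[Thm.~B]{Faber_Patra_fixed_structure} and its proof: a hyperbolic component exists only when $\bar f$ is, up to conjugation, inseparable-like or has all fixed points multiple except critical ones. I expect to argue via the fixed-point index formula on $\PP^1(k)$,
\[
  \sum_{\bar f(a)=a} \frac{1}{1-\bar f'(a)} = 1,
\]
applied to the non-critical simple fixed points. More concretely, the fixed points of $\bar f$ number $\bar d + 1$ with multiplicity. Each critically fixed direction contributes multiplicity $1$, and the direction into $X$ contributes its multiplicity. If $\Ncf(f,x) \le p$, I would try to derive a contradiction by showing that $\bar f$ must then have a simple non-critical fixed direction, which would be an indifferent direction containing a classical fixed point by Rumely's First Identification Lemma. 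That contradiction would come from $X$ being hyperbolic, or from the leaf condition.

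The main obstacle is the residue-characteristic counting. I would use the residue formula above: at a multiple fixed point the index is determined by the local form, and summing $1/(1-\bar f'(a)) = 1$ in $k$ over fixed points that are critical (each contributing $1$) forces the number of critical fixed points $N$ to satisfy $N \equiv 1 \pmod p$, adjusted by the multiple point's contribution. If the unique multiple fixed direction (toward $X$) has residue $0$, then $N \equiv 1 \pmod p$ with $N \ge 1$. The case $N = 1$ should be excluded because then $\bar f$ has a fixed point count forcing $\bar d = 1$. This would give $N \ge p+1$ at a single leaf, hence $\Ncf(f,X) \ge p+1$. Verifying that the residue at the multiple direction vanishes, which I expect comes from its multiplicity being divisible by $p$ or from $\bar f$ being a translation near it, is the step I would check most carefully against the prequel's local analysis.
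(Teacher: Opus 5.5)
There is a genuine gap. Your argument is entirely local to one leaf $x$ of $X$, but the inequality $\Ncf(f,x) \ge p+1$ at a single leaf is false, so no local argument can give the lemma.

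\textbf{Why the residue step fails.} The step that breaks is the hope that the direction $\vv$ pointing into $X$ has vanishing index. That direction is not a multiple fixed point of $T_x f$. A hyperbolic component contains no id-indifferent points, so by the Second Indifference Lemma every fixed direction at $x$ is simple. (Shearing directions are not fixed directions at all, so they do not enter the fixed-point count of $T_x f$.) Since the fixed locus continues into $\vv$, the direction $\vv$ is not critical either. Hence its multiplier $\mu$ lies outside $\{0,1\}$ and its index $1/(1-\mu)$ is nonzero. The index formula at $x$ then only gives $\Ncf(f,x) \equiv -\mu/(1-\mu)$, which need not be $1 \bmod p$.

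\textbf{A counterexample to the per-leaf bound.} Take $p=3$ and $f(z) = z^2 + \epsilon/(z-1)$ with $0<|\epsilon|<1$.
At $\zeta_{0,1}$ the reduction is $z^2$. The directions $0,\infty$ are critically fixed, and the direction $1$ is fixed with multiplier $2=-1$.
For $|\epsilon|^{1/2} < t < 1$, the points $\zeta_{1,t}$ are multiplicatively indifferent with tangent map $w \mapsto 2w$, where $w = z-1$.
At $\zeta_{1,|\epsilon|^{1/2}}$, use the coordinate $u = (z-1)/\rho$ with $\rho^2=\epsilon$. The reduction is $2u + 1/u$, with critically fixed directions $u = \pm\sqrt{-1}$ and the direction $\infty$ fixed with multiplier $-1$.
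So $X = [\zeta_{0,1}, \zeta_{1,|\epsilon|^{1/2}}]$ is a hyperbolic component with $\Ncf(f,X) = 4 = p+1$. Yet each of its two leaves carries only $2 < p+1$ critically fixed directions.

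\textbf{The missing idea: the divisibility is global.} The paper sums over all type~II repelling points $x_1, \ldots, x_m$ of $X$.
First, \cite[Prop.~7.4]{Faber_Patra_fixed_structure} gives $\sum_i \deg_f(x_i) = m-1+pj$, and $j \ge 1$ because each $\deg_f(x_i) \ge 2$.
Second, the fixed directions at $x_i$ are the $v_X(x_i)$ simple directions along $X$ together with critically fixed ones, so $\Ncf(f,x_i) = \deg_f(x_i) + 1 - v_X(x_i)$.
Third, the handshake identity $\sum_i v_X(x_i) = 2(m-1)$ then yields $\Ncf(f,X) \ge pj+1 \ge p+1$.

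In the language of your index formula, the two ends of an edge of $X$ carry multipliers $\mu$ and $\mu^{-1}$, whose indices sum to $1$. Summing the index formula over all vertices therefore gives $\Ncf(f,X) \equiv 1 \pmod p$. On the other hand, $\Ncf(f,X) = \sum_i \deg_f(x_i) - m + 2 \ge m+2 \ge 3$, which excludes $\Ncf(f,X) = 1$. Your local argument only succeeds when $X$ is a single point, since then there is no edge direction.
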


\begin{proof}
  Let $x_1, \ldots, x_m$ be the type~II repelling fixed points in $X$. Then  
  \[
     \sum_{i=1}^m \deg_f(x_i) = m - 1 + pj
  \]
  for some integer $j$ \cite[Prop.~7.4]{Faber_Patra_fixed_structure}. Since
  $\deg_f(x_i) \ge 2$ for each $i$, we find that
  \[
    pj = \sum_{i=1}^m \deg_f(x_i) - m + 1 \ge m + 1 \ge 2.
  \]
  In particular, $j \ge 1$.

  Viewing $X$ as a metric tree with vertex set $\{x_1, \ldots, x_m\}$
  \cite[Prop.~7.1]{Faber_Patra_fixed_structure}, we see that
  \begin{align*}
    \Ncf(f,X) &\ge \sum_{i=1}^m \Ncf(f,x_i) \\
    &= \sum_{i=1}^m \left( \deg_f(x_i) + 1 - v_X(x_i)\right) \\
    &= 2m - 1 + pj - \sum_i v_X(x_i),
  \end{align*}
  where $v_X(x_i)$ denotes the valence of $x_i$ in $X$.
  Since $X$ has $m$ vertices and $m-1$ edges, the handshaking lemma from graph
  theory gives
  \[
    \Ncf(f,X) \ge pj + 1. \qedhere
  \]
\end{proof}


      

\begin{proposition}
  \label{prop:compressive_classical_bound}
  Let $U$ be a compressive domain for $f$. Write $s$ for the number of
  hyperbolic components of $\Fix(f)$ in $U$. Then the number of distinct
  classical fixed points in $U$ is at least $ps + 1$.
\end{proposition}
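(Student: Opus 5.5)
Induction on $s$ (the number of hyperbolic components in $U$), using Theorem~C to produce classical fixed points in compressive subdomains cut out by the critically fixed directions at repelling points of hyperbolic components. For $s = 0$ the claim is that $U$ contains at least one classical fixed point, which is exactly Theorem~C.

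For $s \ge 1$, we choose a hyperbolic component $X \subset U$. We will pick $X$ ``extremal'' in the following sense: there is a point $x \in X$ such that all hyperbolic components in $U$ other than $X$ lie in a single connected component of $U \smallsetminus X$. We obtain such an $X$ by taking a component of $\Fix(f)$ that is a leaf in the finite tree structure recording how the hyperbolic components in $U$ are separated from each other. Note that $X$ lies in $U$, and is disjoint from $\partial U$, because $f$ is repelling at each $x_i$ in the inward direction.

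By Lemma~\ref{lem:critically_fixed_bound}, $X$ has at least $p+1$ critically fixed directions. Each such direction $\vv$ is based at a repelling type~II point $y \in X$, and $\vv$ points away from $X$, since directions tangent to $X$ are fixed but not critical in the relevant sense. For each such pair $(y,\vv)$ we form the domain
\[
U_{y,\vv} = U \cap B_y(\vv)^-,
\]
where we discard the boundary points $x_i$ not lying in $B_y(\vv)^-$. This domain is compressive: its boundary consists of $y$ together with those $x_i$ lying in $B_y(\vv)^-$, all of which are fixed, with critically fixed inward directions. These at least $p+1$ domains are pairwise disjoint, and each is disjoint from $X$.

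All the other $s-1$ hyperbolic components lie in at most one of these domains, since they lie in a single component of $U \smallsetminus X$ and the domains partition a subset of that complement by direction. Applying the induction hypothesis to that domain gives at least $p(s-1)+1$ classical fixed points there. Applying Theorem~C to each of the remaining $\ge p$ domains gives at least one classical fixed point in each. Since the domains are disjoint, the total is at least $p(s-1) + 1 + p = ps + 1$.

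The main obstacle is this bookkeeping step: guaranteeing that the remaining hyperbolic components are not spread across several of the subdomains, or, if they are, that the count still works. A cleaner alternative avoids the extremal choice entirely. We would prove the stronger statement by strong induction. Letting $U_1, \ldots, U_k$ be the subdomains at the critically fixed directions of $X$ (with $k \ge p+1$), and $s_j$ the number of hyperbolic components in $U_j$, we want
\[
\sum_{j} (p s_j + 1) \ge p(s-1) + k \ge ps + 1.
\]
This requires every other hyperbolic component to lie in some $U_j$, that is, in a critically fixed direction from $X$. If that fails, we would choose $X$ so that the component of $U \smallsetminus X$ containing the others is a critically fixed direction, or else treat the non-critical directions by further decomposition. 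Verifying this localization is the step I expect to need the structural results of \cite{Faber_Patra_fixed_structure}.
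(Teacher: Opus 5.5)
There is a genuine gap, and it is exactly the localization step you flag at the end. Your extremal choice does ensure that the other $s-1$ hyperbolic components lie in \emph{at most} one of the domains $U_{y,\vv}$. But they may lie in \emph{none} of them.

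Any other hyperbolic component lies in a direction at some point of $X$ that points out of $X$ and contains classical fixed points, because its repelling points have classical fixed points in at least three directions. Such directions come in two kinds: critically fixed directions at repelling points of $X$, and shearing directions at type~II points of $X$. Suppose the remaining components sit in a shearing direction $\ww$ at $y \in X$. Then $U \cap B_y(\ww)^-$ is not compressive, since $\ww$ is not even fixed by $T_y f$. Neither Theorem~C nor your inductive hypothesis applies there, and your count yields only the $p+1$ classical fixed points coming from the critically fixed directions of $X$. That is too few once $s \ge 2$.

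Re-choosing the leaf does not fix this in general: you would need a leaf whose link toward the rest is critically fixed, and you give no reason one exists. Your strong-induction variant presupposes the same unproved equality $\sum_j s_j = s-1$.

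The missing idea is to cut along all hyperbolic components simultaneously and let the shearing directions cancel, rather than induct. The paper takes $\Gamma$ to be the hull of $\partial U$ and the classical fixed points in $U$. Each hyperbolic component $X_i$ lies in $\Gamma$, and $\Gamma \smallsetminus X_i$ has exactly $n_i = \Ncf(f,X_i) + \Nshear(f,X_i)$ components. Since the $X_i$ are disjoint subtrees, Lemma~\ref{lem:critically_fixed_bound} shows that $\Gamma \smallsetminus \bigcup_i X_i$ has
\[
h = 1 + \sum_i (n_i - 1) \ge 1 + ps + \sum_i \Nshear(f,X_i)
\]
components. Conversely, a component containing no classical fixed point must have a shearing inward direction at one of its boundary points. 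Otherwise the domain it cuts out is compressive, and Theorem~C puts a classical fixed point in it. Hence
\[
h \le c + \sum_i \Nshear(f,X_i),
\]
where $c$ is the number of distinct classical fixed points in $U$. Comparing the two bounds gives $c \ge ps+1$.

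Your compressive subdomains at critically fixed directions are exactly the pieces this count exploits. What you are missing is the device of charging each non-compressive piece to a shearing direction, which the lower bound on $h$ already pays for.
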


\begin{proof}
  If $s = 0$, then Theorem~C shows that $U$ contains at least one classical
  fixed point and the proof is complete. For the remainder of the argument, we
  assume that $s \ge 1$.

  Let $\Gamma$ be the connected hull of the boundary points of $U$ and the
  classical fixed points in $U$. Write $X_1, \ldots, X_s$ for the hyperbolic
  components of $f$ in $U$.  Each type~II repelling fixed point $x \in X_i$ has
  at least three simply fixed directions, since there are no id-indifferent
  fixed points in $X_{i}$ \cite[Prop.~7.1]{Faber_Patra_fixed_structure}. Thus,
  Rumely's First Identification Lemma shows that there are classical fixed
  points in at least three distinct directions at $x$. That is, $x$ is a branch
  point of $\Gamma$. It follows that $X_i \subset \Gamma$ since $X_i$ is the
  connected hull of its type~II repelling fixed points
  \cite[Prop.~7.1]{Faber_Patra_fixed_structure}.

  Write
  \[
  \Gamma' = \Gamma \smallsetminus \bigcup_{i=1}^s  X_i.
  \]
  We are going to give upper and lower bounds for the number of connected
  components of $\Gamma'$, which we denote by $h$.

  Write $n_i$ for the number of connected components of $\Gamma \smallsetminus
  X_i$. Each such component corresponds to a point $x \in X_i$ and a direction
  $\vv \in T_x$ that contains a classical fixed point. Since the direction
  points out of $X_i$, it is either critically fixed or else a shearing
  direction. Conversely, every critically fixed direction or shearing direction
  at a point of $X_i$ corresponds to a connected component of $\Gamma
  \smallsetminus X_i$.  That is,
  \[
     n_i  = \Ncf(f,X_i) + \Nshear(f,X_i),
  \]
  where $\Nshear(f,X_i)$ is the total number of shearing directions at points
  of $X_i$.  Since $\Gamma$ is a topological tree and the $X_i$ are pairwise
  disjoint, an inductive argument shows that the number of connected components
  of $\Gamma'$ is $1 + \sum(n_i - 1)$. We have
  \begin{align}
    \label{eq:components_lower_bound}
    h &= 1 + \sum_{i=1}^s (n_i - 1) \notag \\ 
    &= 1 - s + \sum_{i=1}^s \Ncf(f,X_i) + \sum_{i=1}^s \Nshear(f,X_i)  \notag \\
    &\ge 1 - s + s(p+1) + \sum_{i=1}^s \Nshear(f,X_i) \notag \\
    &= 1 + ps + \sum_{i=1}^s \Nshear(f,X_i),
  \end{align}
  where the inequality follows from Lemma~\ref{lem:critically_fixed_bound}.
  
  Turning to the upper bound, we write $c$ for the number of distinct classical
  fixed points of $f$ contained in $U$. There are at most $c$ components of
  $\Gamma'$ that contain a classical fixed point. Let $V$ be a component that
  does not contain a classical fixed point. We claim that the inward direction
  at some boundary point of $V$ is shearing.  Let $x$ be a boundary point of
  $V$ and $\vv \in T_x$ the direction that contains $V$. Then $x$ is a type~II
  fixed point for $f$. If $x$ is a boundary point of $U$, then $\vv$ is
  critically fixed. If instead $x$ lies in some hyperbolic component, then
  there is a classical fixed point in the direction $\vv$, so that $\vv$ is
  either critically fixed or shearing. But if all inward directions at all
  boundary points of $V$ are critically fixed, then Theorem~C shows that $V$
  contains a classical fixed point, which is absurd. So some inward direction
  at a boundary point of $V$ is shearing, and that boundary point lies on a
  hyperbolic component. It follows that
  \begin{equation}
    \label{eq:components_upper_bound}
    h \le c + \sum_{i=1}^s \Nshear(f,X_i).
  \end{equation}

  Comparing \eqref{eq:components_lower_bound} and
  \eqref{eq:components_upper_bound}, we immediately obtain $c \ge 1 + ps$, as
  desired.
\end{proof}

\begin{proof}[Proof of Theorem~D]
  Write $s$ for the number of hyperbolic components in $\Fix(f)$, and write $c$
  for the number of distinct classical fixed points of $f$. Applying
  Proposition~\ref{prop:compressive_classical_bound} with $U = \BerkK$ and
  using the fact that a rational function has at most $d+1$ distinct classical
  fixed points, we see that
  \[
    ps+1 \le c \le d + 1.
    \]
  Thus, $s \le d / p$. Since every non-hyperbolic component contains at least
  one of the classical fixed points, we conclude that $\Fix(f)$ has at most $c
  + d/p \le d + 1 + d/p$ connected components.
\end{proof}
  

\section{Polynomials}
\label{sec:polynomials}

Juan Rivera-Letelier suggested that a mass formula for repelling fixed points
of polynomials gives a different approach to bounding the number of components
of the fixed locus. This mass formula does not appear elsewhere in the
literature. 
    
\begin{proposition}[Rivera-Letelier]
  \label{prop:RL_repelling}
  Let $f \in K[z]$ be a polynomial of degree $d \ge 2$. Then
  \[
    \sum_{\substack{x \in \Fix(f) \\ x \text{ repelling}}} \deg_f(x) = d.
  \]
\end{proposition}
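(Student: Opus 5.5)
The plan is to use the point $\infty$, which is a totally invariant classical fixed point of a polynomial, to show that the repelling fixed points lie on a single ray toward $\infty$, and then to telescope the local degrees along that ray. Here ``repelling'' means a type~II fixed point $x$ with $\deg_f(x)\ge 2$, as in the prequel; classical fixed points are not included in the sum.

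\textbf{Step 1: the direction toward $\infty$.} For any point $x$ of type~II or~III, write $\vv_\infty(x)\in T_x$ for the direction containing $\infty$. Since $f^{-1}(\infty)=\{\infty\}$, the disk $B_x(\vv_\infty)^-$ maps onto $B_{f(x)}(\vv_\infty)^-$ with no surplus. Moreover, the directional multiplicity there equals $\deg_f(x)$, because the closed disk complementary to $B_x(\vv_\infty)^-$ maps onto its image closed disk with degree $\deg_f(x)$. Now let $x$ be a fixed type~II point. Then $\vv_\infty(x)$ is fixed by $T_xf$, and the reduction of $f$ at $x$ is a polynomial of degree $\deg_f(x)$ in a coordinate with $\vv_\infty$ at infinity. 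If $\deg_f(x)\ge 2$, this direction is a superattracting, hence simple, fixed point of $T_xf$.

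\textbf{Step 2: the repelling points form a chain.} Let $x$ be repelling. By Rumely's First Identification Lemma applied to $\vv_\infty(x)$ (simply fixed, surplus $0$), the disk $B_x(\vv_\infty)^-$ contains exactly one classical fixed point, namely $\infty$. Hence the closed disk $D_x=\BerkK\smallsetminus B_x(\vv_\infty)^-$ contains all $d$ finite classical fixed points, counted with multiplicity. Any two such closed disks meet, so they are nested. Consequently the repelling fixed points lie on the ray $[\zeta_0,\infty)$, where $\zeta_0$ corresponds to the smallest closed disk containing all finite fixed points. I order them as $x_1,\dots,x_m$ moving toward $\infty$. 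The case $m=0$ must be ruled out. For a sufficiently large disk the ray point $y$ satisfies $f(y)$ strictly farther out with multiplicity $d$. Near $\zeta_0$, a Newton-polygon count for $f(z)-z$ on $D_{\zeta_0}$ combined with the piecewise-affine behaviour of $f$ along the ray (as in Lemma~\ref{lem:metric_fixed_pt}) should produce at least one repelling fixed point.

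\textbf{Step 3: telescoping.} For $y$ on the ray, let $m(y)$ be the multiplicity of $f$ in the direction $\vv_\infty(y)$. This is the degree of $f$ on the closed disk $D_y$; it is nondecreasing toward $\infty$ and equals $d$ eventually. I would show the following:
\begin{itemize}
  \item Beyond $x_m$ the ray is pushed outward with multiplicity $d$, so $m(y)=d$ just past $x_m$ and $\deg_f(x_m)$ accounts for the last jump.
  \item On each open segment $(x_k,x_{k+1})$ there are no repelling points. The key claim to establish is that the inward direction at $x_{k+1}$ (toward $x_k$) has multiplicity equal to $\deg_f(x_{k+1})-\deg_f(x_k)$ worth of ``excess''. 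Equivalently, $m$ equals $\sum_{j\le k}\deg_f(x_j)$ on that segment.
\end{itemize}
The natural tool for this claim is the fixed-point count in Proposition~\ref{prop:compressive_count}, applied to the annuli between consecutive $x_k$ (these are simple domains bounded by fixed points). In this count the surplus multiplicities are computed from the tangent maps at $x_k$ and $x_{k+1}$.

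\textbf{Main obstacle.} The hard part is Step~3: relating the multiplicity at $x_{k+1}$ in the direction of $x_k$ to the degrees $\deg_f(x_j)$ for $j\le k$. This is delicate because the segment $(x_k,x_{k+1})$ may be partly fixed (id-indifferent) or moved. My first attempt would be a Riemann--Hurwitz-type count on the reduction at $x_{k+1}$: the reduction is a polynomial of degree $\deg_f(x_{k+1})$, and I would compare preimage counts in the disk below $x_{k+1}$ with the classical fixed-point counts from the First Identification Lemma. The rest of the argument is routine once this identity is in hand.
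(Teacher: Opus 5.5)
Your argument has two errors, and each is fatal on its own. First, the proposition also counts classical repelling fixed points. Such an $x$ has $|f'(x)|>1$, hence $\deg_f(x)=1$, and it contributes~$1$; the paper's proof splits the sum exactly this way. With your reading (type~II points only) the identity is false. Take the polynomial $f(z)=z+c\prod_{i=1}^d(w_iz-1)$ of Section~\ref{sec:examples}. All $d$ finite fixed points are repelling, and there is no repelling type~II fixed point: such a point $y$ would have $\deg_f(y)\ge 2$ finite classical fixed points in its closed disk, but a classical repelling $x$ has no fixed point on $(x,\infty)$. So your left side is~$0$, and the case with no repelling type~II point, which you try to exclude in Step~2, really occurs.

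Second, Step~1 inverts the surplus computation. Total invariance of $\infty$ shows that the directions at $x$ \emph{not} containing $\infty$ contain no pole, hence have surplus~$0$. Since the surpluses at $x$ sum to $d-\deg_f(x)$, the direction $\vv_\infty(x)$ has surplus $d-\deg_f(x)$, not~$0$. The First Identification Lemma then puts $1+d-\deg_f(x)$ classical fixed points in $B_x(\vv_\infty)^-$, so the closed disk below a repelling type~II point $x$ contains only $\deg_f(x)$ finite fixed points, not all $d$. There is no chain. The closed disks below distinct repelling type~II points are pairwise disjoint; for instance, in the small-residue-characteristic example with $\lfloor d/p\rfloor\ge 2$, the points $\zeta_{x_1^{-1},|y_1|}$ and $\zeta_{x_2^{-1},|y_2|}$ have disjoint closed disks, since $|x_2^{-1}-x_1^{-1}|=|x_2|^{-1}>1\ge|y_1|,|y_2|$. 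So the telescoping of Step~3, which you had not established anyway, has nothing to act on.

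The missing idea is the correct version of your Step~1, applied to the other directions. At a repelling type~II point $y$, every direction $\vv\ne\vv_\infty(y)$ has $s_f(\vv)=0$, so the First Identification Lemma gives $F_f(\vv)=\tilde F_f(\vv)$. Summing over these $\vv$, and using that $\vv_\infty(y)$ is a simple fixed point of $T_yf$, shows that the closed disk below $y$ contains exactly $\deg_f(y)$ classical fixed points, counted with multiplicity. The global input is then a partition rather than a chain. Use that $f$ preserves the order $\preceq$ given by disk inclusion and is injective on arcs $[x,y]$ with $x\preceq y$. From this one shows that a classical repelling fixed point lies below no repelling type~II point. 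Each finite attracting or indifferent classical fixed point $x$ lies below exactly one, namely the infimum of the set of points of $[x,\infty]$ attracted to $\infty$. Thus $\sum_y\deg_f(y)$ counts precisely the finite non-repelling classical fixed points with multiplicity. Adding the classical repelling ones, each contributing~$1$, recovers all $d$ finite fixed points.
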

    
\begin{proof}
  Recall that we can impose a partial order on points of $\BerkK$: write $x
  \preceq y$ if $\|g\|_x \le \|g\|_y$ for any polynomial $g \in K[z]$, where
  $\|\cdot\|_x$ is the seminorm associated with $x$, and similarly for $y$. Points
  of type~I, II, and III correspond to supremum seminorms on disks, which means
  $\zeta_{a,r} \preceq \zeta_{b,s}$ if and only if the corresponding classical
  disks satisfy $D(a,r) \subset D(b,s)$. Since $f$ has a totally ramified fixed
  point at infinity, the image of a disk $D(a,r)$ is always a disk, and
  inclusion of disks is preserved by $f$. In terms of the partial ordering,
  this means $f(x) \preceq f(y)$ whenever $x \preceq y$. Moreover, $f$ is
  injective on the arc $[x,y]$ if $x \preceq y$.

  Let $x$ be a classical repelling fixed point. We claim that $x$ and $\infty$
  are the only fixed points on the arc $[x,\infty]$. Indeed, if $\lambda$ is
  the fixed-point multiplier at $x$, then the proof of
  \cite[Lem.~2.4]{Faber_Patra_fixed_structure} shows that
  $f(\zeta_{x,\varepsilon}) = \zeta_{x,|\lambda|\varepsilon}$ for all
  $\varepsilon$ sufficiently small. So if $\zeta_{x,r}$ were a fixed point in
  $(x,\infty)$, then we would have the following inequality using the path
  distance metric \cite[\S2.7]{Baker-Rumely_BerkBook_2010}:
  \[
    \rho(\zeta_{x,\varepsilon}, \zeta_{x,r}) = \log \frac{r}{\varepsilon} 
    > \log \frac{r}{|\lambda|\varepsilon}
    = \rho(\zeta_{x,|\lambda|\varepsilon}, \zeta_{x,r}) =
    \rho(f(\zeta_{x,\varepsilon}), f(\zeta_{x,r})) \ge \rho(\zeta_{x,\varepsilon}, \zeta_{x,r}).
  \]
  The final inequality holds because the action of $f$ on the arc
  $[\zeta_{x,\varepsilon}, \zeta_{x,r}]$ is injective and locally non-contracting 
  for the path distance metric. Evidently we have encountered a contradiction.

  Next, let $x$ be an attracting or indifferent classical fixed point that is
  distinct from $\infty$. We claim that there is a single repelling type~II
  fixed point on the arc $[x,\infty]$.  Let $U \subset [x,\infty]$ be the set
  of points that are attracted to $\infty$ under iteration of $f$. As $f$ is
  order preserving, $U$ is a connected neighborhood of $\infty$, and $U$ is
  disjoint from a neighborhood of $x$
  \cite[Lem.~2.4]{Faber_Patra_fixed_structure}. It follows that $y := \inf U
  \ne x$. We claim that $y$ is fixed by $f$. On one hand, $f(y) \not\in U$, so
  $f(y) \preceq y$. On the other hand, $f$ is continuous and order-preserving
  on $[y, \infty]$, so if $f(y) \prec y$, then there is $z \in (y, \infty)$
  such that $f(y) \prec f(z) \prec y$. But this contradicts the fact that $z
  \in U$, and hence must be attracted to $\infty$. Thus $f(y) = y$. The
  direction toward $\infty$ is fixed under the action of $T_y f$. If $y$ were
  an indifferent fixed point, then every point in $(y,\infty)$ sufficiently
  close to $y$ would also be fixed, which contradicts the definition of $y$. We
  conclude that $y$ is a type~II repelling fixed point. Suppose there is a
  second type~II repelling fixed point $y' \in (x,\infty)$. Length scales by
  the local degree along segments, so all points of the interval $(y',y)$ are
  fixed and indifferent. This means $\deg_f(y') > 1$ and $\deg_f(y) > 1$, while
  $\deg_f(w) = 1$ for $w \in (y',y)$. But this is impossible since the local
  degree is non-increasing on the segment $(\infty,x)$
  \cite[Thm.~9.42]{Baker-Rumely_BerkBook_2010}.
  
  Suppose that $y$ is a type~II repelling fixed point for $f$, and let $\vv$ be
  a direction at $y$ that does not contain infinity. Since $\infty$ is totally
  ramified, there is no pole in the direction $\vv$, and we have $s_f(\vv) =
  0$. By Rumely's First Identification Lemma, we see that
  \[
     F_f(\vv) = \tilde F_f(\vv).
  \]
  Summing over all directions not containing $\infty$ and using the fact that
  $\infty$ is a simple fixed point for both $f$ and $T_y f$, we see that
  \[
  \#\{\text{classical fixed points } x \preceq y\} =
  \#\{\text{finite fixed points of } T_y f\} = \deg_f(y).
  \]

  We have proved that
  \begin{align*}
  \sum_{\substack{x \in \Fix(f) \\ \text{ repelling}}} \deg_f(x)
  &= \sum_{\substack{x \in \Fix(f) \\ \text{ classical repelling}}} 1
  + \sum_{\substack{y \in \Fix(f) \\ \text{ repelling type~II}}} \deg_f(y) \\
  &= \sum_{\substack{x \in \Fix(f) \\ \text{ classical repelling}}} 1
  + \sum_{\substack{y \in \Fix(f) \\  \text{ repelling type~II}}}
  \#\{\text{classical fixed points } x \preceq y\} = d.
  \end{align*}
  The final equality follows from the fact that the second sum counts all finite
  attracting or indifferent classical fixed points.
\end{proof}

\begin{proposition}
  \label{prop:poly_hyperbolic}
  Let $f \in K[z]$ be a non-constant polynomial, and suppose there is a
  hyperbolic component $X$ in $\Fix(f)$. Then $X = \{x\}$ for some type~II
  repelling fixed point $x$. Moreover, $x$ admits no shearing direction and
  $T_x f$ is purely inseparable.
\end{proposition}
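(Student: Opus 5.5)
The plan is to exploit the order structure established in the proof of Proposition~\ref{prop:RL_repelling}. Since $f$ is a polynomial, it preserves the partial order $\preceq$. Moreover, that proof shows two facts:
\begin{itemize}
  \item every classical fixed point $x \preceq y$ below a repelling type~II fixed point $y$ is non-repelling;
  \item every finite non-repelling classical fixed point $x$ has exactly one repelling type~II fixed point on $[x,\infty]$.
\end{itemize}
For the first fact: if $x$ were repelling, then $[x,\infty]$ would contain no fixed points other than $x$ and $\infty$, contradicting $f(y)=y$.

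\textbf{Step 1: $X$ is a single point.} First I show that two distinct repelling type~II fixed points $y, y'$ are never comparable. If $y' \prec y$, pick a classical fixed point $x \preceq y'$; one exists because $\deg_f(y') \ge 2$ counts them. Then $y'$ and $y$ both lie on $[x,\infty]$, contradicting uniqueness.

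Now suppose $X$ contains two repelling type~II points $y \ne y'$. Let $w$ be the join of $y$ and $y'$, i.e.\ the point where $[y,\infty]$ and $[y',\infty]$ meet. Then $w \succ y$ and $w$ lies on the arc $[y,y'] \subset X$. Since $X$ is a metric tree whose vertices are its repelling fixed points \cite[Prop.~7.1]{Faber_Patra_fixed_structure}, the branch point $w$ of $X$ (valence at least~$3$: toward $y$, toward $y'$, and toward $\infty$ or further) should be a repelling type~II point. This contradicts incomparability. If the branch-point claim is awkward, I would instead argue directly: the points of $(y,w)$ are fixed and indifferent, and the local degree is non-increasing along $[\infty,x]$, exactly as in the proof of Proposition~\ref{prop:RL_repelling}. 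Hence $X$ has a single repelling point $y$. Since $X$ is the connected hull of its repelling type~II points, we get $X=\{y\}$.

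\textbf{Step 2: no shearing directions.} Let $\vv \in T_y$ be a direction not containing $\infty$. As in Proposition~\ref{prop:RL_repelling}, $s_f(\vv)=0$, so Rumely's First Identification Lemma gives $F_f(\vv) = \tilde F_f(\vv)$. A shearing direction is one containing classical fixed points but not fixed by $T_y f$. For such $\vv$ we would have $\tilde F_f(\vv)=0$, forcing $F_f(\vv)=0$, which is absurd. The direction toward $\infty$ is fixed, since $f$ preserves order and fixes $y$. So there are no shearing directions.

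\textbf{Step 3: $T_yf$ is purely inseparable.} Write $g = T_y f$. Since $\infty$ is totally ramified, $g$ is a polynomial of degree $e = \deg_f(y) \ge 2$ over the residue field. Because $X = \{y\}$, no direction at $y$ is id-indifferent, and every finite fixed direction leads out of $X$. The prequel's classification then says each such direction is critically fixed, as there is no shearing. Thus every finite fixed point $a$ of $g$ has $g'(a)=0$. In particular each is a simple root of $g(z)-z$, so $g(z)-z$ has $e$ distinct roots. These are all roots of $g'$, which has degree at most $e-1$. Hence $g' \equiv 0$ and $g(z) = g_1(z^p)$.

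The main obstacle is upgrading this to pure inseparability, namely $g(z) = a z^{p^k} + b$. My first attempt would be to show that $g_1$ again has all its fixed points critical, or more likely to use the surjectivity of order-preserving $f$ on directions together with $s_f(\vv)=0$ in every finite direction. These show every finite direction at $y$ has exactly one preimage direction, so $g$ is injective on $\mathbb{P}^1$ of the residue field. An injective polynomial over an algebraically closed field is purely inseparable.
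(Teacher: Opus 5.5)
You have proved everything the paper proves, up to and including $g'\equiv 0$ (with one small repair, noted in the second paragraph). Step~2 and the derivation of $g'\equiv 0$ are exactly the paper's argument. Your last paragraph should be deleted, however. The ``upgrade'' to $g(z)=az^{p^k}+b$ is false, so no argument can supply it.

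The mechanism you propose is where it breaks. Vanishing surplus in the finite directions only says that each $B_y(\vv)^-$ maps onto the single disk $B_y(T_yf(\vv))^-$. It says nothing about how many directions map to a given $\ww$: the preimage directions of $\ww$ have multiplicities summing to $\deg_f(y)$, and there may be several of them.

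Concretely, in residue characteristic $2$ take $f(z)=z^4+z^2$ and $x=\zeta_{0,1}$. The tangent map is $g(z)=z^4+z^2=(z^2+z)^2$, so $g'\equiv 0$. Its fixed points are $\infty$ and the four simple roots of $z^4+z^2+z$, and all of them are critical, so $\{\zeta_{0,1}\}$ is a hyperbolic component. Yet $g(0)=g(1)=0$, and $\tilde K(z)/\tilde K(g)$ contains the separable quadratic subextension $\tilde K(z)/\tilde K(z^2+z)$. Your other idea fails on the same example: $g_1(u)=u^2+u$ has the non-critical fixed point $0$.

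The paper's proof stops at $g'\equiv 0$ and calls that ``purely inseparable.'' The only later use, in Corollary~\ref{cor:juan_bound}, is $p\mid\deg_f(x)$, which $g=g_1(z^p)$ already gives. So once the last paragraph is removed, your proof is complete for the statement as the paper uses it.

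Your Step~1 takes a genuinely different route from the paper. The paper works locally at $x$:
\begin{itemize}
  \item The direction toward $\infty$ is critically fixed, so a second repelling point $y\in X$ lies in a finite, hence good, direction $\vv$.
  \item Since $y$ is a branch point of $\Gamma_{\Fix}$, the disk $B_x(\vv)^-$ holds at least two classical fixed points, so $\tilde F_f(\vv)\ge 2$ by Rumely's First Identification Lemma.
  \item The Second Indifference Lemma then forces id-indifferent points on $(x,y)$, which a hyperbolic component cannot contain.
\end{itemize}

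You instead use the order. The arc joining two repelling points must leave one of them in the direction toward $\infty$, where $f$ expands; your local-degree monotonicity argument says the same thing. This avoids both the indifference lemmas and the branch-point property. In fact your incomparability step is superfluous: if $y'\prec y$, the arc $[y',y]\subset X$ already leaves $y'$ upward, giving the same contradiction.

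Your primary ``branch point'' version needs a small repair, because $w$ need not have valence $3$ in $X$: nothing forces $X$ to continue past $w$ toward $\infty$. What does hold is that $T_wf$ fixes three distinct directions, namely toward $y$, toward $y'$, and toward $\infty$. So if $w$ were indifferent it would be id-indifferent, which a hyperbolic component excludes.
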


\begin{proof}
  Let $x \in X$ be a type~II repelling fixed point. We claim that $x$ admits no
  shearing direction. Without loss of generality, we may change coordinate via
  an affine invertible map, so that $x = \zeta_{0,1}$. The reduction of $f$ is
  a polynomial of degree at least~2, which shows that the direction toward
  infinity is critically fixed. Since the point at infinity is fixed and
  totally ramified, no other direction at $x$ can be a bad direction. By
  Rumely's First Identification Lemma, a direction at $x$ contains a classical
  fixed point if and only if it is a fixed direction. In particular, $x$ admits
  no shearing direction. 

  If $X \ne \{x\}$, then $X$ contains a second type~II repelling fixed point
  $y$ \cite[Prop.~7.1]{Faber_Patra_fixed_structure}. Consider the nontrivial
  arc $[x,y] \subset X$. Let $\vv \in T_x$ contain $y$. Then $B_x(\vv)^-$ does
  not contain $\infty$ since the direction towards $\infty$ is critically
  fixed. Now $y$ is a branch point of $\Gamma_{\Fix}$ (as is any repelling
  type~II fixed point of a hyperbolic component), which means $B_x(\vv)^-$
  contains at least two classical fixed points. We have already established
  that $\vv$ is not a bad direction, so Rumely's First Identification Lemma
  implies that the fixed-point multiplicity of $\vv$ for $T_x f$ is greater
  than~1. Then our Second Indifference Lemma
  \cite[Lem.~3.2]{Faber_Patra_fixed_structure} implies that the arc $(x,y)$
  contains id-indifferent fixed points, which contradicts the fact that $X$ is
  a hyperbolic component \cite[Prop.~7.1]{Faber_Patra_fixed_structure}.  Thus,
  $X = \{x\}$.

  Let $g = \tilde f$ be the reduction of $f$, so that $T_x f(z) = g(z)$.  Since
  $X = \{x\}$, every fixed direction at $x$ is critically fixed. This implies
  the roots of $g(z) - z$ are simple, and that $g(z) - z$ divides $g'(z)$. If
  $g'(z)$ is nonzero, then its degree is strictly smaller than that of $g(z) -
  z$, a contradiction. Thus $g'(z) = 0$ and $T_x f$ is purely inseparable.
\end{proof}

\begin{corollary}
  \label{cor:juan_bound}
  Let $f \in K[z]$ be a polynomial of degree $d \ge 2$. Then the number of
  components of the fixed locus of $f$ is at most
  \[
  \begin{cases}
    d + 1 & \text{ if $p = 0$ or $p > d$} \\
    d + \lfloor \frac{d}{p}\rfloor + 1 & \text{ if $0 < p \le d$}.
  \end{cases}
  \]
\end{corollary}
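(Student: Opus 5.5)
The plan is to count components of $\Fix(f)$ using Proposition~\ref{prop:RL_repelling} (Rivera-Letelier's mass formula) together with Proposition~\ref{prop:poly_hyperbolic}, without invoking Theorem~C. Every component of $\Fix(f)$ is either hyperbolic or contains a classical fixed point. The point $\infty$ is always a fixed point, and its component contributes~$1$. So it suffices to bound the number of components containing a finite classical fixed point, plus the number $s$ of hyperbolic components.

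For the components containing finite classical fixed points, I first sort by type. A component containing a repelling classical fixed point $x$ is just $\{x\}$, since a repelling classical point is isolated in $\Fix(f)$. By the argument in the proof of Proposition~\ref{prop:RL_repelling}, each finite attracting or indifferent classical fixed point $x$ lies below a unique repelling type~II fixed point $y$ on $[x,\infty]$. If $(x,y)$ contained no non-fixed point, then $x$ and $y$ would lie in one component, and that component has no other repelling type~II point on the arc. I want to associate to each non-hyperbolic, non-$\infty$ component a repelling point $z$ of mass $\deg_f(z)\ge 1$. The natural choice is: the classical point itself if it is repelling, and otherwise the unique repelling type~II point $y$ above it. Distinct components should get distinct $y$, because all of the classical points below $y$ with $[x,y]$ fixed lie in the same component as $y$.

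A simpler route is to avoid this injectivity and count directly. The number of components containing finite classical fixed points is at most the number of distinct finite classical fixed points, which is at most $d$. Each hyperbolic component is $\{x\}$ with $x$ a repelling type~II point. Its reduction $T_xf$ is purely inseparable of degree $\deg_f(x)\ge 2$, so $p \mid \deg_f(x)$ and $\deg_f(x)\ge p$. Here is how to combine the two. Let $R$ be the set of repelling fixed points other than $\infty$. By Proposition~\ref{prop:RL_repelling},
\[
  \sum_{z\in R}\deg_f(z) = d .
\]
Every finite non-hyperbolic component contains either a classical repelling point (weight $1$) or an attracting/indifferent classical point $x$. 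In the latter case I take $y$ to be the repelling type~II point above $x$, and I need that $y$ is not one of the hyperbolic singletons. This holds because a hyperbolic singleton has no shearing direction and all its fixed directions are critically fixed. If such a $y$ were the repelling point directly above $x$, the fixed direction containing $x$ would be critically fixed and contain classical points; this is allowed, so I need more care. Instead I will use the bound $d$ on finite classical points as the count of these components, and bound $s$ separately.

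Since each hyperbolic point $x$ has $\deg_f(x)\ge p$, and these are distinct points of $R$, we get $ps\le d$. This gives $s\le\lfloor d/p\rfloor$, and $s=0$ if $p=0$ or $p>d$. Note that the case $p>d$ needs $\deg_f(x)\le d$, which holds. The total is then at most $1+d+\lfloor d/p\rfloor$, with the first case of the corollary following from $s=0$. The main obstacle is the step $\deg_f(x)\ge p$ with $p\mid\deg_f(x)$: a purely inseparable reduction $g(z)$ of degree $\deg_f(x)$ is a polynomial in $z^{p^k}$ for some $k\ge1$, so its degree is a positive multiple of $p$. I must check that $\deg_f(x)$ equals the degree of the reduction, which is the standard identification of the local degree with $\deg T_xf$.
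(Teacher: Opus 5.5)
Your final argument is the paper's proof: Proposition~\ref{prop:poly_hyperbolic} makes each hyperbolic component a singleton $\{x\}$ with $p \mid \deg_f(x)$, the mass formula of Proposition~\ref{prop:RL_repelling} then gives $ps \le d$, and the non-hyperbolic components are bounded by the at most $d+1$ distinct classical fixed points. The abandoned injectivity detour is unnecessary, and there is one slip at $p = 0$: there $ps \le d$ is vacuous, so to get $s=0$ you should note that $g' = 0$ cannot hold for a reduction of degree at least $2$ in residue characteristic zero (or cite Theorem~B of the prequel, as the paper does for $p = 0$ or $p > d$).
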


\begin{proof}
  If $p = 0$ or $p > d$, then $\Fix(f)$ has no hyperbolic component by
  Theorem~B of \cite{Faber_Patra_fixed_structure}. Assume that $0 < p \le d$.
  Suppose that $\Fix(f)$ has $r$ hyperbolic components, which are of the form
  $\{x_1\}, \ldots, \{x_r\}$ with $p \mid \deg_f(x_i)$ for each $i$ by
  Proposition~\ref{prop:poly_hyperbolic}. The mass formula in
  Proposition~\ref{prop:RL_repelling} implies that
  \[
     d = \sum_{\substack{x \in \Fix(f) \\ x \text{ repelling}}} \deg_f(x) \ge pr.
  \]
  That is, $r \le d / p$. There are at most $d+1$ non-hyperbolic components
  since each contains a classical fixed point, so we have at most $d + d/p + 1$
  components in total.
\end{proof}


\section{Extremal Examples}
\label{sec:examples}

In this section, we give examples of polynomial maps of every degree $d \ge 1$
that achieve the bounds in Theorem~B \cite{Faber_Patra_fixed_structure},
Theorem~D, and Corollary~\ref{cor:juan_bound}. We take $K$ to be an arbitrary
algebraically closed field that is complete with respect to a nontrivial
nonarchimedean absolute value, and we write $p$ for the residue characteristic
of $K$. Our examples will naturally split into two cases: large residue
characteristic ($p > d$ or $p = 0$) and small residue characteristic ($0 < p
\le d$).


\subsection{Large residue characteristic}
Choose elements $w_1, \ldots, w_d \in K^\times$ that satisfy
\[
   0 < |w_d| < \cdots < |w_1|, 
\]
and let $c \in K^\times$ satisfy $|c| > |w_d|^{-1}$.
Consider the polynomial
\[
  f(z) = z + c \prod_{i=1}^{d} (w_i z - 1).
\]
The classical fixed points of $f$ are $\infty, w_1^{-1}, \ldots, w_d^{-1}$.

The derivative at a fixed point $w_j^{-1}$ is
\[
  f'(w_j^{-1}) = 1 + c w_j \prod_{i \neq j} (w_i w_j^{-1} - 1).
\]
Our assumptions on the elements $w_1, \ldots, w_j$ and $c$ show that $|c w_j| >
1$ and each factor $|w_i w_j^{-1} - 1| = \max\{1, |w_i w_j^{-1}|\} \ge 1$. Thus,
\[
|f'(w_j^{-1})| = |c w_j| \cdot \prod_{i < j} |w_i w_j^{-1}|
\ge |c w_j| > 1.
\]
Then $w_1^{-1}, \ldots, w_d^{-1}$ are repelling fixed points, while $\infty$ is
attracting. It follows that each of these points is isolated in the fixed locus
\cite[Lem.~2.4]{Faber_Patra_fixed_structure}, so that $\Fix(f)$ has at least
$d+1$ components.

If we assume that $p = 0$ or $p > d$, then we have produced a polynomial that
achieves the bound in Theorem~B of \cite{Faber_Patra_fixed_structure} as well
as the bound in Corollary~\ref{cor:juan_bound}.


\subsection{Small residue characteristic}

It will be convenient in the following calculations to write $o(1)$ for
an unspecified polynomial with coefficients in $\mm$. Such a polynomial will
vanish when we reduce coefficients.

Assume that $0 < p \le d$. Write $d = mp + r$ for some nonnegative integer $r <
p$. Choose elements $x_1, \ldots, x_m$ and $w_1, \ldots, w_r$ in $K$ such that
\[
   0 < |w_r| < \cdots < |w_1| < |x_m| < \cdots < |x_1| < 1,
\]
and define
\[
   c = \frac{(-1)^{(m-1)p}}{x_1^p}.
\]
Finally, define $y_1, \ldots, y_m \in K^\times$ by

\[
    y_{1}^{p-1} = 1
\]

\[
   y_j^{p-1} = \frac{x_j^{(j-2)p}(-1)^{(m-j)p}}{c(x_1 \cdots x_{j-1})^p};
\]
here we choose any $(p-1)$-st root of the expression on the right to represent
$y_j$. Then $|c| > 1$, $|y_1| = 1$, and for $j \ge 2$, we have
\[
|y_j|^{p-1} = |x_j|^{(j-2)p}|x_2 \cdots x_{j-1}|^{-p} = \prod_{i=2}^{j-1} |x_j / x_i|^p \le 1.
\]
In particular, $|x_j y_j| < 1$ for all $j$.

Define
\[
   f(z) = z + f_0(z)f_1(z), 
\]
where
\begin{align*}
  f_0(z) &= c \prod_{i=1}^m \left[ (x_i z - 1)^p - (x_iy_i)^{p-1}(x_iz-1)\right] \\
  f_1(z) &= (-1)^r \prod_{i=1}^r (w_i z - 1).
\end{align*}
We claim that $\Fix(f)$ consists of $d+1$ classical components and $m = \lfloor
d/p\rfloor$ hyperbolic components. The hyperbolic components are
$\{\zeta_{x_i^{-1},|y_i|}\}$ for $i = 1, \ldots, m$.

The roots of $f_0, f_1$ and the point at $\infty$ are the classical fixed
points of $f$. The roots of $f_1(z)$ are $w_1^{-1}, \ldots, w_r^{-1}$. Fix an
index $j \le r$. We see that
\begin{align*}
  f'(w_j^{-1}) &= 1 + (-1)^r f_0 (w_j^{-1}) \prod_{i \neq j} (w_i w_j^{-1} - 1) \\ &=
  1 + (-1)^r f_0 (w_j^{-1}) \left( (-1)^{r-j} + o(1)\right)
  \prod_{i < j} (w_i w_j^{-1} - 1),
\end{align*}
where $|w_i w_j^{-1} - 1| = |w_i w_j^{-1}| > 1$.  Since $|w_j| < |x_i|$ for all
$i$, we have
\[
|f_0 (w_j^{-1})| = |c| \prod_{i=1}^m \left| (x_i w_j^{-1} - 1)^p - (x_iy_i)^{p-1}(x_iw_j^{-1} - 1)\right|
= |c| \prod_{i=1}^m |x_i w_j^{-1}|^p > 1.
\]
It follows that
\[
   |f'(w_j^{-1})| = |c| \prod_{i=1}^m |x_i w_j^{-1}|^p \prod_{i < j} |w_i w_j^{-1}| > 1,
\]
which means the fixed point $w_j^{-1}$ is repelling and isolated in $\Fix(f)$.

Next we argue that for each $j = 1, \ldots, m$, the point
$\zeta_{x_j^{-1},|y_j|}$ is an inseparable fixed point for $f$ of local
degree~$p$. To that end, we fix an index $1 \le j \le m$ and define
\[
  g_j(z) := y_j^{-1} f(y_j z + x_j^{-1}) - x_j^{-1}y_j^{-1} = z + y_j^{-1}f_0(y_j z + x_j^{-1}) f_1(y_j z + x_j^{-1}). 
\]  
It suffices to show that $g_j$ has integral coefficients and reduction $z^p$.
Since the absolute values of the $x_i$'s are strictly decreasing as the index
increases, the factor indexed by $i$ in the product defining $f_0(y_j z
+ x_j^{-1})$ satisfies
\begin{align*}
  (i=j) : \quad & (x_iy_j z)^p - (x_j y_j)^{p-1}(x_j y_jz) = (x_j y_j)^p (z^p - z) \\
  (i < j) : \quad  & (x_i x_j^{-1})^p\left[ (x_j y_j z + 1 - x_j x_i^{-1})^p - (x_j y_i)^{p-1}(x_j y_j z + 1 - x_j x_i^{-1})\right]
  = (x_i x_j^{-1})^p(1+o(1)) \\
  (i > j) : \quad & (-1)^p + o(1). 
\end{align*}
Putting these three quantities together and using the given expressions for $c$
and $y_j$, we find that
\begin{align*}
y_j^{-1}f_0(y_j z + x_j^{-1}) &= y_j^{-1} c (x_1 \cdots x_{j-1})^p
x_j^{-(j-1)p}(x_j y_j)^p (z^p - z)\left( (-1)^{(m-j)p} + o(1)\right) \\
&= z^p - z + o(1).
\end{align*}
Since $|w_i x_j^{-1}| < 1$ for all $i,j$ and $|y_j| < |x_j^{-1}|$, the
definition of $f_1$ gives
\[
f_1(y_j z + x_j^{-1}) = (-1)^r \prod_{i=1}^r (w_i y_j z + w_i x_j^{-1} - 1) = 1 + o(1).
\]
Combining all of this analysis shows that $g_j(z) = z^p + o(1)$, as desired.

We are now able to conclude that each point $\zeta_j := \zeta_{x_j^{-1},|y_j|}$
forms a hyperbolic component of $\Fix(f)$. Moreover, since $|y_j| < |x_j^{-1}|$
for all $j$, it follows that the point $\zeta_j$ does not lie on the arc $[0,
  \infty]$. The fact that $f$ has inseparable reduction of local degree~$p$ at
$\zeta_j$ implies that there are $p$ critically fixed directions in
$T_{\zeta_j}$ that do not contain infinity. Moreover, these are all good
directions since $f$ is a polynomial. Rumely's First Identification Lemma
shows that each such direction contains a single classical fixed point, which
must be attracting. In summary, we have now exhibited $mp + r + 1= d+1$
attracting or repelling fixed points (including the point at infinity), and we
have found $m = \lfloor d/p \rfloor$ hyperbolic components of the fixed
locus. Thus, $f$ has at least $d + \lfloor d/p \rfloor + 1$ components in its
fixed locus, which agrees with the bound in Theorem~D. 


\section{Crucial points and compressive domains}
\label{sec:crucial_pts}

The purpose of this section is to prove Theorem~E. We begin with a brief study
of the boundary of the id-indifference locus for a rational function $f$ in
\S\ref{subsec:boundary}, and we follow in \S\ref{subsec:compressive_trees} with
an analysis of a particularly important tree inside a compressive domain for
$f$. We conclude with the proof of Theorem~E in
\S\ref{subsec:proof_of_Theorem_E}.


\subsection{The boundary of the id-indifference locus}
\label{subsec:boundary}

Recall that a fixed point $x$ for $f$ is called \textbf{id-indifferent}
if the associated map on tangent directions $T_x f$ is the identity. The
\textbf{id-indifference locus} is the set $I_f$ of all id-indifferent fixed
points for $f$.

The boundary of $I_f$ consists entirely of fixed points.  By the First
Indifference Lemma, a classical fixed point lies in the boundary of $I_f$ if
and only if its multiplier is congruent to $1$ modulo the maximal ideal. By the
Second Indifference Lemma, a type~II point $x$ lies in the boundary of $I_f$ if
and only if it is not id-indifferent and there is a direction $\vv \in T_x$
such that the fixed point multiplicity for $\vv$ under $T_x f$ is greater
than~1.

The presence of id-indifferent fixed points in $U$ introduces substantial
complications into the proof of Theorem~E. We prove two preliminary results to
address some of these complications.

\begin{proposition}
  \label{prop:num_classically_indifferent}
  Let $J$ be a connected component of $I_f$. Write $u_1, \ldots, u_m$ for the
  boundary points of $J$ that are fixed and repelling under $f$, and write
  $\vv_i$ for the direction at $u_i$ that meets $J$. The number of
  classical fixed points in the boundary of $J$, counted according to
  fixed-point multiplicity, is given by
  \[
     2 - m + \sum_{i=1}^m \Big(\tilde F_f(\vv_i) - 1\Big).
  \]
\end{proposition}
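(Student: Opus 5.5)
Throughout, write $c_J$ for the number of classical fixed points in $\partial J$, counted with multiplicity, and $V_i = B_{u_i}(\vv_i)^-$.

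The plan is to view $J$ as lying inside a simple domain and count classical fixed points there with Rumely's First Identification Lemma, in the spirit of Proposition~\ref{prop:compressive_count}. Concretely, set $W = \bigcap_{i=1}^m V_i$. We first need to understand the boundary of $J$. It consists of:
\begin{itemize}
  \item classical fixed points whose multiplier is congruent to $1$ (First Indifference Lemma);
  \item type~II fixed points that are not id-indifferent but have a direction of fixed-point multiplicity $>1$ (Second Indifference Lemma).
\end{itemize}
We claim that every type~II boundary point of $J$ is repelling, so it is one of the $u_i$. Indeed, a type~II fixed point with $\deg_f = 1$ whose tangent map has a multiple fixed direction has $T_x f$ a translation. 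Such a point is id-indifferent only if it is the identity; otherwise it is a nontrivial translation, and then it has only one fixed direction. That direction has multiplicity $2$, and we should check that points along it are id-indifferent only when the boundary is repelling. If non-repelling indifferent boundary points can occur, they are excluded by the statement's hypothesis about which boundary points are counted, so I would verify this using \cite{Faber_Patra_fixed_structure}.

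With that in hand, $J \subset W$ and the classical fixed points in $\overline{J}$ lying in $W$ are exactly those in $\partial J$. Any other classical fixed point in $W$ would lie in a direction off $J$ at some interior id-indifferent point $x$. Since $T_x f$ is the identity, every such direction is fixed with multiplicity $1$. Rumely's First Identification Lemma then gives $1 + s_f(\vv)$ classical fixed points there, and one must argue these belong to $\partial J$, or else that the direction contains no fixed point. This is delicate: the direction might contain another fixed region. I expect the cleaner route is the one below.

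So instead count directly at each $u_i$. By Rumely's First Identification Lemma, $V_i$ contains $\tilde F_f(\vv_i) + s_f(\vv_i)$ classical fixed points, with $\tilde F_f(\vv_i) \ge 2$. The complement $\BerkK \smallsetminus V_i$ therefore contains $d + 1 - \tilde F_f(\vv_i) - s_f(\vv_i)$ of them. Arguing as in Proposition~\ref{prop:compressive_count}, the number in $W$ is
\[
(d+1) - \sum_{i=1}^m \bigl(d+1 - \tilde F_f(\vv_i) - s_f(\vv_i)\bigr).
\]
It then remains to identify $\sum_i s_f(\vv_i)$ in terms of $d$ and $m$. Here I would use a surplus analysis inside $W$, in the manner of the Corollary following Theorem~C. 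Heuristically, $f$ near $J$ is close to the identity and has degree $1$ on $J$, so $f(W)$ misses roughly $m-1$ complementary disks' worth of preimages. This should give $\sum_i s_f(\vv_i) = d(m-1)$, since each point of $J$ has degree~$1$. The resulting count is $1 + \sum_i(\tilde F_f(\vv_i) - 1) - (m-1)$, which matches the formula.

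The main obstacle is this surplus identity. I would attack it by noting that $f$ restricted to the connected hull of $\partial J$ is locally the identity. Fix $y \in J$ and a classical point $a \notin W$. Then I would count preimages of $a$ in $W$: every preimage must lie in some direction off $J$ at a point of $J$ where $f$ acts as the identity on directions, and these sum to degree~$1$. This gives exactly $d - s_f(\vv_i)$ preimages in each complement, summing consistently to the required identity.
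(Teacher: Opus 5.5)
Your count of classical fixed points in $W=\bigcap_i B_{u_i}(\vv_i)^-$ is fine. The trouble is that $W$ can contain classical fixed points outside $\bar J$, so this number is not the one in the proposition.

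The points responsible are the ones you set aside. Your claim that every type~II boundary point of $J$ is repelling is false: additively indifferent points routinely lie in $\partial J$, and are simply not among the $u_i$. Such a point $x$ may have a shearing direction $\vv$. By Rumely's Second Identification Lemma, that direction contains $s_f(\vv)>0$ classical fixed points, which lie in $W$ but in a different component of $\Fix(f)$.

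For example, let $f(z)=z+1+c/z$ with $0<|c|<1$. The points $\zeta_{a,r}$ with $r>1$ form a component $J$ of $I_f$. Its only classical boundary point is $\infty$, fixed with multiplicity $2$, and its type~II boundary points are the additively indifferent points $\zeta_{a,1}$. So $m=0$, and the proposition correctly predicts $2$. But $W=\BerkK$ contains $3$ classical fixed points. The extra one, $-c$, is repelling and lies in the bad direction at $\zeta_{0,1}$ containing $0$, which is moved by $T_{\zeta_{0,1}}f(w)=w+1$.

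Adding a pole gives the same phenomenon with $m=1$. Take $f(z)=z+\pi z^2+\epsilon/(z-1)$ with $0<|\epsilon|<|\pi|^2<1$. Then $u_1=\zeta_{0,1/|\pi|}$, $\tilde F_f(\vv_1)=2$ and $s_f(\vv_1)=1$, so $W$ contains $3$ classical fixed points. However, $\partial J$ contains only the two near $\pm\sqrt{\epsilon/\pi}$. The third, near $1-\epsilon/\pi$, lies in a shearing direction at the additively indifferent boundary point $\zeta_{1,|\pi|}$.

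Consequently the identity $\sum_i s_f(\vv_i)=d(m-1)$ is false; the two examples give $0\neq -2$ and $1\neq 0$. In fact, granting the proposition, your own count shows that
$\sum_i s_f(\vv_i)=(d-1)(m-1)+\#\{\text{classical fixed points in }W\smallsetminus\bar J\}$.
So no identity in $d$ and $m$ alone can work, and the heuristic that $f$ has degree one on $J$ says nothing about the regions of $W$ beyond $\partial J$.

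Even granting your identity, the algebra does not reach the target. Substituting it into $(d+1)-\sum_i\bigl(d+1-\tilde F_f(\vv_i)-s_f(\vv_i)\bigr)$ yields $1+\sum_i(\tilde F_f(\vv_i)-1)$, which is off by $m-1$.

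Your abandoned first route also misapplies the First Identification Lemma, which does not hold in the form $1+s_f(\vv)$ at id-indifferent points. For $f(z)=z+\pi z^2$, the direction at $\zeta_{0,1}$ containing $1$ has surplus $0$ but no classical fixed point.

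The missing idea is to count only the classical fixed points in the component $X$ of $\Fix(f)$ that contains $J$. This discards the shearing fixed points automatically: at an additively indifferent boundary point, every direction other than the one meeting $J$ is not fixed, and so misses $X$. The paper does this in two steps:
\begin{itemize}
\item Theorem~A of the prequel gives the total count in $X$ as $2+\sum_y(\deg_f(y)-1-\Ncf(f,y))$, summed over type~II repelling $y\in X$.
\item Lemma~6.1 there gives the count in $X\cap B_{u_i}(\vv)^-$, for each non-critically-fixed $\vv\neq\vv_i$, as $\tilde F_f(\vv)$ plus the same correction summed over the repelling points in that ball.
\end{itemize}
Subtracting, and using that the $u_i$ are the only repelling points of $\bar J$, leaves $2+\sum_i(\tilde F_f(\vv_i)-2)$.
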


\begin{proof}
  Without further comment in this proof, classical fixed points will be counted
  according to fixed-point multiplicity.

  Note that $J$ is entirely contained inside a single component $X$ of
  $\Fix(f)$. Write $\bar J$ for the closure of $J$. We will count the number of
  classical fixed points in $\bar J$ by first counting the number in $X$
  that lie outside $\bar J$. Given a type~II point $x \in X$ and a direction $\vv
  \in T_x$, we write $F_f(\vv,X)$ for the number of classical fixed points in
  $X \cap B_x(\vv)^-$, counted according to fixed-point multiplicity. Lemma~6.1
  of \cite{Faber_Patra_fixed_structure} shows that if $x$ is not id-indifferent
  and $\vv$ is not critically fixed, then
  \begin{equation}
    \label{eq:X_classical_fixed}
  F_f(\vv,X) = \tilde F_f(\vv) +
  \sum_{\substack{y \in X \cap B_x(\vv)^- \\ \text{type~II repelling}}}
  \left( \deg_f(y) - 1 - \Ncf(f,y)\right).
  \end{equation}
  Note that $F_f(\vv,X) = 0$ if $\vv$ is critically fixed, since $X \cap
  B_x(\vv)^- = \varnothing$ in that case. Similarly, $F_f(\vv,X) = 0$ if $x$ is
  additively indifferent and $\vv$ is not equal to the unique direction that
  meets $J$, since then $\vv$ is not fixed. Applying
  \eqref{eq:X_classical_fixed} at each non-classical boundary point $u_i$ of $J$ and each
  non-critically fixed direction $\vv \ne \vv_i$, we find that the number of
  classical fixed points in $X \smallsetminus \bar J$ is
  \begin{align}
    \label{eq:outside_I}
     \sum_{i=1}^m & \left[\sum_{\substack{\vv \in T_{u_i} \smallsetminus \vv_i \\ \text{non-critically fixed}}} \left( \tilde F_f(\vv) +
  \sum_{\substack{y \in X \cap B_{u_{i}}(\vv)^- \\ \text{type~II repelling}}}
  \left( \deg_f(y) - 1 - \Ncf(f,y)\right) \right)\right] \\
     &= \sum_{i=1}^m \left(\deg_f(u_i) + 1 - \tilde F_f(\vv_i) - \Ncf(f,u_i)\right)
     + \sum_{\substack{y \in X \smallsetminus \bar J \\ \text{type~II repelling}}}
     \left( \deg_f(y) - 1 - \Ncf(f,y)\right). \notag
  \end{align}
  
  Theorem~A of \cite{Faber_Patra_fixed_structure} shows that the total number
  of classical fixed points in $X$, counted according to fixed-point
  multiplicity, is
  \begin{equation}
    \label{eq:thmA}
     2 + \sum_{\substack{y \in X  \\ \text{type~II repelling}}}
     \left( \deg_f(y) - 1 - \Ncf(f,y)\right).
  \end{equation}
  Subtracting \eqref{eq:outside_I} from \eqref{eq:thmA} gives the number of
  classical fixed points in $\bar J$:
  \[
     2 + \sum_{\substack{y \in  \bar J  \\ \text{type~II repelling}}}
     \left( \deg_f(y) - 1 - \Ncf(f,y)\right)
     - \sum_{i=1}^m \left(\deg_f(u_i) + 1 - \tilde F_f(\vv_i) - \Ncf(f,u_i)\right). 
  \]
  Note that only the boundary points of $J$ can be type~II repelling. Hence,
  this expression equals
  \[
     2 + \sum_{i=1}^m \left( \tilde F_f(\vv_i) - 2\right),
  \]
  which is equivalent to the expression in the statement of the proposition.
\end{proof}

\begin{corollary}
  \label{cor:bound_on_no_of_id_indiff_comp}
  Each component of $I_f$ has at least two classical fixed points in its
  boundary, counted according to fixed-point multiplicity. In particular, the
  number of components of $I_f$ is bounded above by $(d+1)/2$, where $d =
  \deg(f)$.
\end{corollary}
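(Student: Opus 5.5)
The plan is to read the first assertion directly off Proposition~\ref{prop:num_classically_indifferent}, and then get the second from the total count of classical fixed points. Fix a component $J$ of $I_f$. With $u_1,\ldots,u_m$ the repelling fixed boundary points of $J$ and $\vv_i$ the direction at $u_i$ meeting $J$, the number of classical fixed points in $\partial J$, counted with multiplicity, is
\[
2 - m + \sum_{i=1}^m \big(\tilde F_f(\vv_i) - 1\big) = 2 + \sum_{i=1}^m \big(\tilde F_f(\vv_i) - 2\big).
\]
It therefore suffices to show $\tilde F_f(\vv_i) \ge 2$ for each $i$, that is, that $\vv_i$ is a multiple fixed direction of $T_{u_i} f$.

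I would obtain this from the characterization of the boundary of $I_f$ recalled above via the Second Indifference Lemma. A type~II boundary point $u_i$ of $I_f$ is not id-indifferent, and it admits some direction whose fixed-point multiplicity under $T_{u_i} f$ exceeds~$1$. I need that this direction is exactly $\vv_i$, the one pointing into $J$. By the Second Indifference Lemma \cite[Lem.~3.2]{Faber_Patra_fixed_structure}, id-indifferent points accumulate at $u_i$ only along directions of multiplicity greater than~$1$. Conversely, near $u_i$ the points of $J$ lie in the direction $\vv_i$. Hence $\tilde F_f(\vv_i) \ge 2$.

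The main obstacle is making this identification of directions rigorous. One also has to rule out that some boundary point lying in $J$ is of type~III. A type~III point cannot be id-indifferent and on the boundary in a way that affects the count: its tangent map has only two directions, and it is id-indifferent or not as a whole. The proposition lists only repelling type~II boundary points, so any type~III point of $J$ already contributes nothing. If the identification turns out delicate, I would instead argue directly from the proof of the proposition. There each $u_i$ contributes $\tilde F_f(\vv_i) - 2$, and a fixed direction of multiplicity~$1$ at a non-id-indifferent point would, by the First Indifference Lemma and a local linearization along $\vv_i$, force the points near $u_i$ in direction $\vv_i$ to be non-id-indifferent. That contradicts the fact that $\vv_i$ meets $J$.

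For the second assertion, distinct components of $I_f$ have disjoint closures. Two components sharing a boundary classical fixed point $x$ is impossible, since a classical point has only one direction, so at most one component abuts it. So the boundary classical fixed points of different components are distinct points, and their multiplicities add. Summing the lower bound of $2$ over all components and comparing with the total of $d+1$ classical fixed points counted with multiplicity gives $2\cdot\#\{\text{components}\} \le d+1$.
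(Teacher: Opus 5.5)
Your proposal is correct and matches the paper's argument. The first claim is read off Proposition~\ref{prop:num_classically_indifferent} in the form $2 + \sum_{i}(\tilde F_f(\vv_i) - 2)$, with $\tilde F_f(\vv_i) \ge 2$ supplied by the Second Indifference Lemma characterization of $\partial I_f$ (a step the paper leaves implicit). The second claim follows by comparing with the $d+1$ classical fixed points of $f$, and your observation that distinct components share no classical boundary point is the detail needed for the multiplicities to add.

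One correction: the blanket sentence ``distinct components of $I_f$ have disjoint closures'' is false in general. A repelling type~II fixed point with two multiple fixed directions lies in the closure of two different components. You only use the classical-point version, which your one-direction argument does establish.
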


\begin{proof}
  The first claim is immediate from
  Proposition~\ref{prop:num_classically_indifferent}. The second claim follows
  from the fact that $f$ has exactly $d+1$ classical fixed points, counted
  according to fixed-point multiplicity.
\end{proof}

\begin{proposition}
  \label{prop:crucial_wt_formula}
  Let $f$ be a non-constant rational function. Let $x \in \BerkK$ be a type~II
  fixed point for $f$ that is not id-indifferent. If there are $r$ directions
  at $x$ containing a classical fixed point, then the crucial weight of $x$ is
  given by
  \[
     \wR(x) = r - 2 + \sum_{\substack{\vv \in T_x \\ T_x f(\vv) = \vv}} \Big(\tilde F_f(\vv) - 1\Big),
  \]
  where $\tilde F_f(\vv)$ is the multiplicity of $\vv$ as a fixed point of
  $T_x f$.
\end{proposition}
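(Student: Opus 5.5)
The plan is to unwind the definition of the crucial weight from \cite[\S8]{Faber_Patra_fixed_structure} (equivalently \cite{Rumely_new_equivariant}) at a type~II fixed point $x$ that is not id-indifferent. I am recalling that definition rather than re-deriving it, and I will treat it as an assumption to be checked against the prequel: in that case the weight should take the uniform shape
\[
\wR(x) = \deg_f(x) - 1 + \Nshear(f,x).
\]
Rumely states this for repelling points. For indifferent but not id-indifferent points, $\deg_f(x) = 1$ and the expression reduces to $\Nshear(f,x)$, which I expect is what the definition gives there; this is the first thing to confirm. Granting it, the task is to rewrite the right-hand side of the proposition in the same terms, so the proof is a bookkeeping argument on the directions of $T_x$.

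First I classify the directions $\vv \in T_x$ that contain a classical fixed point. By Rumely's First Identification Lemma, the number of classical fixed points in $B_x(\vv)^-$ is $\tilde F_f(\vv) + s_f(\vv)$ when $\vv$ is fixed by $T_x f$. It is $s_f(\vv)$ when $\vv$ is not fixed. (Throughout, $\tilde F_f(\vv) \ge 1$ for a fixed direction and $\tilde F_f(\vv) = 0$ otherwise.) Two consequences follow:
\begin{itemize}
\item every fixed direction contains a classical fixed point;
\item a non-fixed direction contains one exactly when $s_f(\vv) > 0$, i.e.\ when it is a shearing direction.
\end{itemize}
Writing $N_{\mathrm{fix}}$ for the number of fixed directions of $T_x f$, this gives
\[
r = N_{\mathrm{fix}} + \Nshear(f,x).
\]

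Next I use that $T_x f$ is a rational map on $\PP^1$ of degree $\deg_f(x)$ that is not the identity, since $x$ is not id-indifferent. It therefore has exactly $\deg_f(x) + 1$ fixed points counted with multiplicity, so $\sum_{\vv \text{ fixed}} \tilde F_f(\vv) = \deg_f(x) + 1$ and
\[
\sum_{T_x f(\vv) = \vv} \bigl(\tilde F_f(\vv) - 1\bigr) = \deg_f(x) + 1 - N_{\mathrm{fix}}.
\]
Substituting both displays into $r - 2 + \sum(\tilde F_f(\vv) - 1)$, the $N_{\mathrm{fix}}$ terms cancel and I obtain $\deg_f(x) - 1 + \Nshear(f,x)$. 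This is the crucial weight.

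The main obstacle is the first step: making sure the definition in the prequel really yields $\deg_f(x) - 1 + \Nshear(f,x)$ for every non-id-indifferent type~II fixed point, and not only for repelling ones. I would check the additively and multiplicatively indifferent cases separately. In those cases $T_x f$ is a non-identity Möbius map with one or two fixed directions, and $r$ equals $N_{\mathrm{fix}}$ plus the shearing count, so the formula should specialize correctly. I would also check that a point outside $\Gamma_{\Fix}$ (where $r \le 1$ and $\wR(x) = 0$ should hold) is consistent with the identity, which it is provided such a point has degree~$1$ and no shearing directions.
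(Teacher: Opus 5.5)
Your argument is correct and is essentially the paper's proof. The paper likewise uses $\wR(x) = \deg_f(x) - 1 + \Nshear(f,x)$ uniformly for type~II fixed points, so the form you assumed is also right in the indifferent case. It then splits the $r$ directions into the $k$ fixed directions and the $r-k$ shearing directions, and uses $\sum_{\vv \text{ fixed}} \tilde F_f(\vv) = \deg_f(x) + 1$.

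One caution about your closing sanity check, which the proof does not need: a fixed point with $r = 1$ can carry positive weight. For instance, take $f(z) = z + 1/z$. The point $\zeta_{0,1}$ has a single fixed direction (toward the only classical fixed point, $\infty$) and no shearing direction, yet $\wR(\zeta_{0,1}) = \deg_f(\zeta_{0,1}) - 1 = 1$. So crucial weight is not confined to $\Gamma_{\Fix}$.
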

    
\begin{proof}
  Any direction at $x$ containing a classical fixed point is either a fixed
  direction or a shearing direction. Let $\vv_1, \ldots, \vv_k$ be the distinct
  fixed directions at $x$; the remaining $r-k$ are shearing
  directions. Counting the fixed points of $T_x f$ with multiplicity yields
  \[
     \deg_f(x) + 1 = \sum_{j=1}^k \tilde F_f(\vv_j).
  \]
  It follows that the crucial weight at $x$ is
  \begin{align*}
    \wR(x) &= \deg_f(x) - 1 + \Nshear(x) \\
    &= \sum_{j=1}^k \tilde F_f(\vv_j) - 2 + (r-k).
  \end{align*}
  Rearranging this expression gives the desired formula.
\end{proof}


\subsection{Trees associated with compressive domains}
\label{subsec:compressive_trees}

\noindent \textbf{Convention.} Throughout this section, we assume that a
compressive domain $U$ for $f$ has at least one boundary point. Equivalently,
$U \ne \BerkK$.

Our proof of Theorem~E requires a fine understanding of the location of crucial
points in a compressive domain $U$. In the proof of Theorem~E, we will show
that all of these crucial points lie on a particular subtree $\Gamma$ of $\bar
U$. Our goal for this section is to define that subtree and prove several
combinatorial results about it that are required for the crucial-weight count
in Subsection~\ref{subsec:proof_of_Theorem_E}.

For the purpose of this paper, we define a \textbf{finite tree} $\Gamma$ to be
the connected hull of a nonempty finite set of points of $\BerkK$. Note that
each point $x \in \Gamma$ has a natural notion of valence $v_\Gamma(x)$; it is
the number of tangent directions at $x$ that contain points of $\Gamma$. The
\textbf{leaves} of $\Gamma$ are the points with valence~1, and the
\textbf{branch points} of $\Gamma$ are the points with valence different
from~2. A \textbf{vertex set} for $\Gamma$ is any finite subset $V$ containing
all points with valence different from~2. The associated \textbf{edge set} $E$
consists of the maximal open intervals whose endpoints lie in $V$. Then $(V,E)$
determines a combinatorial tree structure on $\Gamma$, and $\Gamma$ is the
geometric realization of this combinatorial tree in the sense of Serre
\cite[Ch.1,\S2]{Serre_Trees_2003}.

Fix a compressive domain $U$, where $f$ has degree $d \ge 1$ and $U$ has $n \ge
1$ boundary points.  Let $x_1, x_2, \ldots, x_n$ be the boundary points of $U$,
let $y_1, y_2, \ldots, y_c$ be the distinct classical fixed points of $f$ in
$U$, and let $z_1, z_2, \ldots, z_m$ be the repelling fixed points in
$U$. Define
\[
  \Gamma_0 = \Gamma_0(f,U) = \Hull(x_1, \ldots, x_n, y_1, \ldots, y_c, z_1, \ldots, z_m).
\]

\begin{lemma}
  \label{lem:Gamma_non-trivial}
  The tree $\Gamma_0$ has at least two leaves.
\end{lemma}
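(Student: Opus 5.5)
A finite tree (the connected hull of a nonempty finite set) has at least two leaves exactly when it is not a single point. So the plan is to show that $\Gamma_0$ has at least two distinct points. Recall that the convention in force is $n \ge 1$, so $x_1$ exists and is a type~II or type~III point on $\partial U$. We will exhibit one further point of the generating set that differs from $x_1$.

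By Theorem~C, $U$ contains a classical fixed point, so $c \ge 1$. Take the classical point $y_1 \in U$. Since $x_1 \in \partial U$ is not classical and $U$ is open, $y_1 \ne x_1$. Hence $\Gamma_0 \supseteq [x_1, y_1]$ is a nondegenerate finite tree.

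It remains to recall why a nondegenerate finite tree has at least two leaves. The generating points of $\Gamma_0$ are finitely many, and every point with valence different from~2 is either a generating point or a branch point of the hull; there are only finitely many of these. Take a vertex set $V$ and form the combinatorial tree $(V,E)$. It has at least two vertices and at least one edge. A finite combinatorial tree with at least one edge has at least two vertices of degree~1, for instance the two endpoints of a longest path. These vertices are exactly the points of valence~1 in $\Gamma_0$, that is, the leaves.

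There is no real obstacle here. The only substantive input is Theorem~C, which guarantees $c \ge 1$. The only subtle point is to note that the boundary points are nonclassical, so $y_1 \ne x_1$. Alternatively, even if $n \ge 2$, the points $x_1 \ne x_2$ already suffice, so Theorem~C is needed only in the case $n = 1$.
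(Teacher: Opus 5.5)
Your proof is correct and follows the paper's argument: the paper also combines Theorem~C, which gives a classical fixed point $y_1 \in U$, with the standing assumption $n \ge 1$, which gives a boundary point $x_1 \notin U$, so $\Gamma_0$ is not a single point and therefore has at least two leaves. Your extra details, namely why $y_1 \ne x_1$ and why a nondegenerate finite tree has two leaves, just make explicit what the paper leaves implicit.
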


\begin{proof}
  By Theorem~C, $U$ contains a classical fixed point, and by our standing
  assumption, $U$ has at least one boundary point.
\end{proof}

The following lemma will help us distinguish the leaves of $\Gamma_0$.

\begin{lemma}
  \label{lem:unique_fixed_no_shearing}
  A repelling fixed point $z_i$ is a leaf of $\Gamma_0$ if and only if $z_i$
  has a unique fixed direction and no shearing direction.
\end{lemma}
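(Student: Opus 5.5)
The plan is to show that the directions at $z_i$ meeting $\Gamma_0 \smallsetminus \{z_i\}$ are exactly the fixed directions and the shearing directions of $T_{z_i} f$. Then $v_{\Gamma_0}(z_i)$ equals the number of fixed directions plus $\Nshear(f,z_i)$. Since $T_{z_i} f$ always has at least one fixed direction, $z_i$ is a leaf exactly when there is one fixed direction and no shearing direction. If $z_i$ is classical, there is only one tangent direction, $\Nshear(f,z_i)=0$, and that direction is trivially fixed, so both sides of the equivalence hold. I will therefore assume that $z_i$ is a type~II repelling fixed point.

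\emph{Fixed and shearing directions lie in $\Gamma_0$.} Let $\vv \in T_{z_i}$ be fixed. Rumely's First Identification Lemma gives $F_f(\vv) = \tilde F_f(\vv) + s_f(\vv) \ge 1$, so $B_{z_i}(\vv)^-$ contains a classical fixed point $y$. If $y \in U$, then $y$ is one of the $y_j$. If $y \notin U$, the connected set $B_{z_i}(\vv)^- \cup \{z_i\}$ meets both $U$ and its complement, and so it contains a boundary point $x_j$; since $x_j \neq z_i$, we have $x_j \in B_{z_i}(\vv)^-$. In either case $\vv$ meets $\Gamma_0$. A shearing direction contains a classical fixed point by definition, and the same argument applies.

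\emph{Directions meeting $\Gamma_0$ are fixed or shearing.} Let $\vv$ contain a point of $\Gamma_0$, and hence contain one of the generating points $x_j$, $y_k$ or $z_k$. If it contains some $y_k$, then it is fixed or shearing, as observed in the proof of Proposition~\ref{prop:crucial_wt_formula}. So suppose $\vv$ is not fixed and contains a type~II fixed point $w$ that is either a boundary point $x_j$ or a repelling point $z_k$. The aim is to produce a classical fixed point in $B_{z_i}(\vv)^-$, which makes $\vv$ shearing. I expect this step to be the main obstacle.

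First I would mimic Lemma~\ref{lem:metric_fixed_pt} on the segment $[z_i,w]$ with the retraction $r$ onto it. Since $T_{z_i} f(\vv) \ne \vv$, points $y \in (z_i,w)$ near $z_i$ satisfy $r(f(y)) = z_i$, so $r\circ f$ lies strictly below the identity near $z_i$. At $w$, if the direction toward $z_i$ is critically fixed (automatic when $w = x_j$, which is the compressive condition), then $r \circ f$ exceeds the identity near $w$. The intermediate value theorem then gives $y \in (z_i,w)$ with $r(f(y)) = y$. If $f(y) \ne y$, Rumely's Second Identification Lemma produces a classical fixed point in a direction at $y$ off the segment, and this direction lies inside $B_{z_i}(\vv)^-$. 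If $f(y) = y$, I would replace $w$ by $y$ and repeat, choosing $y$ extremal so the process stops.

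When the direction at $w$ toward $z_i$ is fixed but not critically fixed, that direction contains a classical fixed point by the First Identification Lemma. I would then try to show this point lies in $B_{z_i}(\vv)^-$ rather than behind $z_i$, by comparing with the fixed direction at $z_i$. Alternatively, I would take the component of $B_{z_i}(\vv)^- \cap U$ bounded by suitable repelling fixed points, check that it is compressive, and apply Theorem~C. Once this step is settled, the counting in the first paragraph completes the proof.
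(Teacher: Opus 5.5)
\textbf{There is a genuine gap.} Your reduction is sound: it suffices to show that the directions at $z_i$ meeting $\Gamma_0$ are exactly the fixed and shearing directions. Your proof that fixed and shearing directions meet $\Gamma_0$ is also correct. The gap is in the converse inclusion, the step you flagged, and the intermediate value argument you propose for it fails on a sign.

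Parametrize $[z_i,w]\cong[0,\ell]$ with $z_i=0$. Near $0$ you correctly get $r\circ f\equiv 0$, so $r\circ f(t)<t$. But a critically fixed direction at $w$ toward $z_i$ is a \emph{repelling} direction: $f$ stretches the segment near $w$ by the local degree $m\ge 2$ and pushes points toward $z_i$. So $r\circ f(\ell-\varepsilon)=\ell-m\varepsilon<\ell-\varepsilon$, and $r\circ f$ lies below the identity near $w$ as well. Thus $r\circ f-\mathrm{id}$ has the same sign at both ends, and nothing forces an interior fixed point; for instance $t\mapsto\max\{0,\ell-m(\ell-t)\}$ has none. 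In Lemma~\ref{lem:metric_fixed_pt} both endpoints push inward; here $z_i$ does not push into $\vv$, because $\vv$ is not fixed. For the same reason, your fallback via Theorem~C cannot work directly: a domain with $z_i$ as a boundary point and inward direction $\vv$ is never compressive.

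What is missing is a direct argument.
\begin{itemize}
\item \emph{Case $w=x_j$.} The direction at $x_j$ toward $z_i$ is $\vv_j$. It is a simple fixed point of $T_{x_j}f$, a map of degree at least $2$, so $x_j$ has another fixed direction. By the First Identification Lemma that direction contains a classical fixed point, and it lies in $B_{z_i}(\vv)^-$.
\item \emph{Case $w=z_k$ with another fixed or shearing direction.} If $z_k$ has a fixed or shearing direction other than the one toward $z_i$, the same argument applies. This covers in particular the case you aimed the intermediate value argument at: a critically fixed direction toward $z_i$ has multiplicity $1<\deg_f(z_k)+1$, so another fixed direction exists.
\item \emph{The genuinely hard case.} The direction at $z_k$ toward $z_i$ is the \emph{unique} fixed direction of $T_{z_k}f$, of multiplicity $\deg_f(z_k)+1\ge 3$. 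The Second Indifference Lemma then gives an arc of id-indifferent fixed points issuing from $z_k$ toward $z_i$. The component $J$ of $I_f$ containing this arc omits the repelling point $z_i$, so $J\subset B_{z_i}(\vv)^-$. Corollary~\ref{cor:bound_on_no_of_id_indiff_comp} gives classical fixed points on $\partial J\subset B_{z_i}(\vv)^-\cup\{z_i\}$, and these lie in $B_{z_i}(\vv)^-$ since $z_i$ is of type~II.
\end{itemize}
This id-indifference argument is exactly how the paper handles the single nontrivial case, a point $z_j$ lying in a direction $\ww\ne\vv$ in the ``if'' direction. Your suggestion to ``compare with the fixed direction at $z_i$'' does not supply it.
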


\begin{proof}
  Suppose that $z_i$ has a unique fixed direction $\vv$ and no shearing
  direction. All of the classical fixed points of $f$ must lie in
  $B_{z_i}(\vv)^{-}$. Each $x_j$ in $\partial U$ has a simply fixed direction
  pointing into $U$, and so each has another fixed direction pointing out of
  $U$. By Rumely's First Identification Lemma, there is a classical fixed point
  lying along every such direction. Thus, the points $x_1, \ldots, x_n, y_1,
  \ldots, y_c$ are all contained in $B_{z_i}(\vv)^{-}$. If $z_i$ is not a leaf
  of $\Gamma_0$, then there is a second direction $\ww \ne \vv$ such that
  $B_{z_i}(\ww)^-$ meets $\Gamma_0$. Then some $z_j$ lies in the direction
  $\ww$, where $j \ne i$. As no classical fixed point is contained in
  $B_{z_i}(\ww)^{-}$, Rumely's First Identification Lemma shows that the
  direction at $z_{j}$ toward $z_{i}$ is the unique fixed direction at
  $z_{j}$. By the Second Indifference lemma, there is a subarc $(z_j,t)$ of
  $(z_j, z_i]$ consisting entirely of id-indifferent fixed points.  But then
    Corollary~\ref{cor:bound_on_no_of_id_indiff_comp} shows that
    $B_{z_i}(\ww)^-$ contains a classical fixed point, which is a
    contradiction. We conclude that only one direction at $z_i$ meets
    $\Gamma_0$, and that $z_i$ is a leaf of $\Gamma_0$.

  Conversely, suppose that $z_{i}$ is a leaf of $\Gamma_{0}$. Let $\vv$ be the
  direction at $z_{i}$ into $\Gamma_{0}$. Let $\ww \ne \vv$ be another
  direction at $z_{i}$. As $z_{i}$ is a leaf of $\Gamma_{0}$,
  $B_{z_i}(\ww)^{-}$ is contained in $U$, and contains no classical fixed
  point. By Rumely's First Identification Lemma, $\ww$ is neither a fixed nor a
  shearing direction at $z_{i}$. As $\ww \ne \vv$ is chosen arbitrarily and
  $z_{i}$ must have at least one fixed direction, the only if part of the lemma
  is proved.
\end{proof}

Reordering the $z_i$'s if necessary, we may assume that $z_1, \ldots, z_e$ are
the repelling type~II fixed points in $U$ with a unique fixed direction and no
shearing direction, and that $z_{e+1}, \ldots, z_m$ each has either a shearing
direction or at least two fixed directions. It follows from
Lemma~\ref{lem:unique_fixed_no_shearing} that $\Gamma_0$ has precisely $n + c +
e$ leaves.

Now we modify the tree $\Gamma_0$ to obtain another tree $\Gamma = \Gamma(f,U)$
that is properly contained inside $U$ and has type~II boundary points. Choose
type~II points $x_1', \ldots, x_n', y_1', \ldots, y_c' \in \Gamma_0$ such that
\begin{itemize}
  \item $x_i'$ is close to $x_i$ and $\wR(x) = 0$ for all
   $x \in [x'_{i},x_{i})$, for $i=1, \ldots, n$, and 
  \item $y_i'$ is close to $y_i$ and $\wR(y) = 0$ for all
   $y \in [y'_{i}, y_{i})$ for $i=1, \ldots, c$.
\end{itemize}
To make precise what we mean by ``close'', let $V_{0}$ be a vertex set for
$\Gamma_{0}$ that contains all points with positive crucial weight. For each
$i$, we choose $x_i'$ on the unique edge that contains $x_i$ in its boundary,
and we choose $y_i'$ on the unique edge that contains $y_i$ in its boundary. In
the case where $U$ is a disk and $\Gamma_0$ has only two leaves, we further
insist that $x_1 < x_1' < y_1' < y_1$ after identifying $\Gamma_0 =
\Hull(x_1,y_1)$ with a real interval. As $U$ is a compressive domain, it
follows that $x'_i$ is nonfixed for all $i = 1, \ldots, n$. Define the finite
tree
\[
  \Gamma = \Gamma(f,U) = \Hull(x_1', \ldots, x_n', y_1', \ldots, y_c', z_1, \ldots, z_e).
\]
It follows that $\Gamma \subset \Gamma_0$ and that $v_{\Gamma_{0}}(x) =
v_\Gamma(x)$ for all $x \in \Gamma \smallsetminus \{x_1', \ldots, x_n', y_1',
\ldots, y_c'\}$. From Lemma~\ref{lem:Gamma_non-trivial}, $\Gamma$ has at least
two points.

Now we define a special vertex set for $\Gamma$. First, write
\[
    I_\Gamma = \{x \in \Gamma : f(x) = x \text{ and $x$ is id-indifferent}\} = I_f \cap \Gamma.
\]
As $\BerkK$ is uniquely path connected, the intersection of two connected
subsets of $\BerkK$ is connected, so
Corollary~\ref{cor:bound_on_no_of_id_indiff_comp} implies that $I_\Gamma$ has finitely
many connected components. In particular, $\partial I_\Gamma$ is a finite set. Define
a vertex set for $\Gamma$ by
\[
   V = \{x \in \Gamma : v_\Gamma(x) \ne 2\} \cup \partial I_\Gamma.
\]
Every point of $V$ is of type~II. We partition $V$ as follows:
\begin{align*}
    V_1 &= \{z_1, z_2, \ldots, z_e\} \\
    V_2 &= \{x \in V : x \text{ is a leaf of $\Gamma$ or an id-indifferent fixed point}\} \smallsetminus V_{1} \\
    V_3 &= \{x \in V : f(x) = x  \} \smallsetminus (V_1 \cup V_2) \\
    V_4 &= \{x \in V : f(x) \ne x\} \smallsetminus (V_1 \cup V_2) \\
\end{align*}

The following lemmas will be useful for computing the number of crucial points
in $\Gamma$.

\begin{lemma}
  \label{lem:classical_valence}
  For $x \in \Gamma \smallsetminus (V_2 \cup I_\Gamma)$, the number of directions at $x$
  containing a classical fixed point is $v_\Gamma(x)$.
\end{lemma}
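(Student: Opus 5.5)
We prove the two inequalities separately: every direction at $x$ that meets $\Gamma$ contains a classical fixed point, and every direction at $x$ containing a classical fixed point meets $\Gamma$.

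\textbf{Directions meeting $\Gamma$ contain classical fixed points.} Let $x \in \Gamma \smallsetminus (V_2 \cup I_\Gamma)$ and let $\vv \in T_x$ meet $\Gamma$. Since $x \notin V_2$, $x$ is not a leaf of $\Gamma$. Because $x$ is not one of the leaves $x_i', y_i'$, we have $v_\Gamma(x) = v_{\Gamma_0}(x)$, and the directions at $x$ meeting $\Gamma$ are exactly those meeting $\Gamma_0$. So $B_x(\vv)^-$ contains one of the defining points of $\Gamma_0$, say $x_j$, $y_j$, or $z_j$.

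\begin{itemize}
  \item If it contains some $y_j$, we are done.
  \item If it contains a boundary point $x_j$, then $x_j$ has a fixed direction pointing out of $U$, since its inward direction is simply fixed and $T_{x_j} f$ has other fixed directions. By Rumely's First Identification Lemma that outward direction contains a classical fixed point, and it lies in $B_x(\vv)^-$ because $x \in U$ sits on the inward side of $x_j$.
  \item If it contains only repelling points $z_j$, choose one furthest from $x$, i.e.\ a leaf of the subtree $\Gamma_0 \cap B_x(\vv)^-$. Such a leaf is a leaf of $\Gamma_0$, so it is one of $z_1, \ldots, z_e$. By Lemma~\ref{lem:unique_fixed_no_shearing} it has a unique fixed direction, pointing back toward $x$. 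If $B_x(\vv)^-$ contained no classical fixed point, we would argue exactly as in the proof of Lemma~\ref{lem:unique_fixed_no_shearing}. The Second Indifference Lemma produces an arc of id-indifferent points emanating from $z_j$ toward $x$, and Corollary~\ref{cor:bound_on_no_of_id_indiff_comp} then forces a classical fixed point in the boundary of that id-indifference component, inside $B_x(\vv)^-$.
\end{itemize}

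The delicate point in the last case is that this component of $I_f$ might reach or pass $x$, so its boundary classical fixed points could sit outside $\vv$. Here we use $x \notin I_\Gamma$. Moving along $[z_j, x]$, the component containing the subarc near $z_j$ must end at a point $t$ strictly before $x$, or at $x$ itself, which is non-id-indifferent. The classical fixed points in the boundary of this component then lie in directions at $t$ off the arc, hence inside $B_x(\vv)^-$. Making this rigorous requires care with Proposition~\ref{prop:num_classically_indifferent}: its count is at least $2$, and at most one boundary point of the component, namely the one on the side toward $x$, faces away. We expect this to be the main obstacle.

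\textbf{Directions containing classical fixed points meet $\Gamma$.} Suppose $\vv \in T_x$ contains a classical fixed point $w$. If $w \in U$, then $w = y_j$ for some $j$, and $[x, y_j] \subset \Gamma_0$. Since $y_j'$ is chosen near $y_j$ on its terminal edge and $x$ is not on $(y_j', y_j]$ (as $x \in \Gamma$), the arc toward $y_j'$ lies in direction $\vv$, so $\vv$ meets $\Gamma$. If $w \notin U$, then $B_x(\vv)^-$ must contain a boundary point $x_j$: $x \in U$, and $U$ is cut out by disks whose boundary points lie on the paths leaving $U$. Then $x_j'$ lies in direction $\vv$ as well, since $x_j'$ is close to $x_j$ and $x \notin [x_j', x_j)$.

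Combining the two parts, the set of directions at $x$ containing a classical fixed point coincides with the set of directions meeting $\Gamma$, and its size is $v_\Gamma(x)$.
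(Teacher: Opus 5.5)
Your argument is correct and is essentially the paper's proof. A direction meeting $\Gamma$ contains a classical fixed point either directly via some $y_j$ or $x_i$, or else via a leaf $z_i$ with $i\le e$, the Second Indifference Lemma, and Corollary~\ref{cor:bound_on_no_of_id_indiff_comp}; conversely, $y_j'$ and $x_i'$ lie in the same direction at $x$ as $y_j$ and $x_i$.

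Two small corrections. First, $x\notin V_2$ does not imply that $x$ is not a leaf of $\Gamma$: the leaves $z_1,\dots,z_e$ make up $V_1$ and are excluded from $V_2$ by definition, which is why the paper treats them separately. Your argument only ever uses $x\notin\{x_i',y_j'\}$, so it covers that case anyway. Second, the ``delicate point'' you flag is not an obstacle. The component $J$ of $I_f$ is connected, meets $B_x(\vv)^-$, and omits $x\notin I_f$, so $J\subset B_x(\vv)^-$. Moreover, the classical points of $\bar J$ cannot equal the non-classical point $x$. Hence no further appeal to Proposition~\ref{prop:num_classically_indifferent} is needed.
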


\begin{proof}
  Suppose first that $x \in V_1$. By definition of the points in $V_1$, all
  classical fixed points lie in a single direction at $x$ and by
  Lemma~\ref{lem:unique_fixed_no_shearing}, $v_\Gamma(x) = 1$. In what remains,
  we assume that $x \in \Gamma \smallsetminus (V_1 \cup V_2 \cup I_\Gamma)$. In
  particular, $x$ is not a leaf of $\Gamma$.

  Let $\vv$ be a direction at $x$ that meets $\Gamma$. For the sake of a
  contradiction, assume that $B_x(\vv)^-$ contains no classical fixed
  point. Then no $x_i'$ and no $y_j'$ lies in $B_x(\vv)^-$. However, this ball
  must contain a leaf of $\Gamma$, so there is $z_i \in B_x(\vv)^-$ for some $i
  \le e$. By Lemma~\ref{lem:unique_fixed_no_shearing}, the direction at $z_i$
  that points back toward $x$ must be the unique fixed direction. By the Second
  Indifference Lemma, there is $t \in [x,z_i)$ such that the entire arc
    $(t,z_i)$ consists of id-indifferent fixed points. Since $x$ is not
    id-indifferent, the ball $B_x(\vv)^-$ contains a component of the
    id-indifference locus. By
    Corollary~\ref{cor:bound_on_no_of_id_indiff_comp}, the ball $B_x(\vv)^-$
    contains a classical fixed point, and we have reached a contradiction.

  Conversely, let $\vv$ be a direction at $x$ that contains a classical fixed
  point $y$. If $y \in U$, then $y = y_j$ for some $j$.  As $x$ is not a leaf,
  $y_j' \in B_x(\vv)^-$. If $y \not\in U$, then $B_x(\vv)^- \not\subset U$,
  which implies that there is a boundary point $x_i \in B_x(\vv)^-$. Again, as
  $x$ is not a leaf, we find $x_i' \in B_x(\vv)^-$, which concludes the proof.
\end{proof}

\begin{lemma}
  \label{lem:repelling_and_additive}
  Any additively indifferent or repelling fixed point in $\Gamma$ is contained
  in $V_1 \cup V_3$.
\end{lemma}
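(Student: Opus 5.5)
Let $x\in\Gamma$ be a fixed point that is additively indifferent or repelling. We must show $x\notin V_2\cup V_4$ and $x\in V$. Since $f(x)=x$, certainly $x\notin V_4$. The argument therefore splits into three steps: show $x\in V$, show $x$ is not id-indifferent, and show $x$ is not a leaf of $\Gamma$ unless $x\in V_1$. The middle step is immediate, because additively indifferent and repelling points are by definition not id-indifferent. Once the other two steps are done, either $x\in V_1$, or else $x\in V\smallsetminus(V_1\cup V_2)$ with $f(x)=x$, which means $x\in V_3$.

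\emph{Step 1: $x$ is a vertex.} If $x$ is repelling, then $x=z_i$ for some $i$ and has type~II. If $x$ is additively indifferent, it is of type~II as well. I would argue that either case forces $x$ to be a branch point of $\Gamma$ or a boundary point of $I_\Gamma$.
\begin{itemize}
\item Suppose $v_\Gamma(x)\ne 2$. Then $x\in V$ by definition.
\item Suppose $v_\Gamma(x)=2$. Take the two directions $\vv,\ww$ at $x$ meeting $\Gamma$. By Lemma~\ref{lem:classical_valence} (applicable once we know $x\notin V_2\cup I_\Gamma$, i.e.\ $x$ is not a leaf), each of $\vv$ and $\ww$ contains a classical fixed point. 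Hence each is a fixed or shearing direction.
\end{itemize}
For an additively indifferent point, $T_xf$ has a unique fixed direction, and that direction has multiplicity at least~$2$. By the Second Indifference Lemma, an arc of id-indifferent points emanates from $x$ in that direction. The direction therefore meets $I_f$, and since $x$ itself is not id-indifferent, $x\in\partial I_f$. I then need this arc to lie in $\Gamma$, so that $x\in\partial I_\Gamma$. To get this, I would use that the id-indifferent component is bounded by classical fixed points (Corollary~\ref{cor:bound_on_no_of_id_indiff_comp}), which lie on or beyond leaves of $\Gamma$. So the arc heads toward points of $\Gamma_0$, and its initial segment lies in $\Gamma$. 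For a repelling point, the classification of repelling type~II points in $\Gamma_0$ gives $z_i\in V$ directly, provided we rule out the possibility that $z_i$ lies in the interior of an edge. If $z_i$ has valence~2 in $\Gamma$, then it has exactly two directions with classical fixed points, and $\deg_f(z_i)\ge2$. The fixed-point count of $T_{z_i}f$ then forces either a multiple fixed direction (giving an adjacent id-indifferent arc, so $z_i\in\partial I_\Gamma$) or a critically fixed direction. I expect this repelling, valence-2 case to be the main obstacle. In the critically fixed case I would try to show the critically fixed direction lies in $\Gamma$ only if it contains $\Gamma$-points, and otherwise produce a contradiction via Rumely's First Identification Lemma. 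The count would use $\deg_f(z_i)+1=\sum\tilde F_f$, where at least three fixed directions (counted with multiplicity) must be distributed among the two $\Gamma$-directions.

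\emph{Step 3: leaves.} The leaves of $\Gamma$ are the points $x_i'$, $y_j'$, and the $z_i$ with $i\le e$. The $x_i'$ are nonfixed by construction. Each $y_j'$ lies on an edge with zero crucial weight adjacent to $y_j$. I would show $y_j'$ is not repelling or additively indifferent by observing that such a point has positive crucial weight. Indeed, Proposition~\ref{prop:crucial_wt_formula} gives $\wR>0$ whenever $r\ge2$ and either $\deg_f\ge2$ or some $\tilde F_f>1$, and this would violate the choice of $y_j'$. The remaining leaves are exactly the points of $V_1$, which completes the argument.
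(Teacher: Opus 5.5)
Your proposal is correct and follows the paper's route. You exclude $V_4$ because $x$ is fixed, and you note that the leaves of $\Gamma$ are the nonfixed $x_i'$, the points $y_j'$, and the points of $V_1$; the $y_j'$ are ruled out because such a point would have classical fixed points in two directions (toward $y_j$ and into $\Gamma$), hence positive crucial weight. Your Step~1 also checks $x\in V$, which the paper asserts without argument. In that step the critically fixed alternative is an unnecessary detour. Every fixed direction of $T_{z_i}f$ contains a classical fixed point, so the $\deg_f(z_i)+1\ge 3$ fixed directions (with multiplicity) all lie in the two $\Gamma$-directions, and one of them must have $\tilde F_f>1$. The resulting id-indifferent arc meets $\Gamma$ in an initial segment simply because it leaves $z_i$ in a direction that meets $\Gamma$.
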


\begin{proof}
  Let $x \in \Gamma$ be an additively indifferent fixed point. Then $x \in
  \partial I_\Gamma \subset V$. Evidently, $x \not\in V_1 \cup V_4$. If $x$
  is a leaf of $\Gamma$, it must be of the form $y'_i$, for some $i$. However,
  every $y'_i$ has at least two directions containing a classical fixed point,
  namely one into $\Gamma$ and another towards $y_i$. Thus, $y'_i$ being 
  additively indifferent implies its crucial weight to be positive. This 
  contradicts the definition of $y'_i$. Hence, $x \in V_3$.  

  Now let $x \in \Gamma$ be a repelling fixed point. Then $x = z_j$ for some
  $j$. If $x$ is a leaf of $\Gamma$, then $x \in V_1$ by
  Lemma~\ref{lem:unique_fixed_no_shearing}.  Otherwise, $x \in V_3$.
\end{proof}

  Since $\BerkK$ is uniquely path-connected, the intersection of connected sets
  is connected. It follows that the components of $I_\Gamma$ are in bijective
  correspondence with the components of $I_f$ that meet $\Gamma$, which are
  also in bijective correspondence with the components of $I_f$ that meet
  $\Gamma_0$. For a component $I_0$ of $I_\Gamma$, let $J_0$ be the component
  of $I_f$ such that $J_0 \cap \Gamma = I_0$.

\begin{lemma}
  \label{lem:S3_helper}
  Let $I_0$ be a component of $I_\Gamma$. Write $u_1, \ldots, u_q$ for the
  type~II points on the boundary of $I_0$ that are fixed and repelling. For
  each $i = 1, \ldots, q$, write $\vv_i$ for the tangent direction at $u_i$
  that meets $I_0$. After reordering if necessary, write $y_{1}, \ldots, y_N$
  for the distinct classical fixed points contained in $\bar{J}_0.$ Write 
  $\ell(\bar I_0)$ for the number of leaves of the
  finite tree $\bar I_0$, and write $a_0$ for the number of additively
  indifferent fixed points on the boundary of $I_0$. Then 
  \[
  \sum_{i=1}^q \left(\tilde F_f(\vv_i) - 1\right)
  = \ell(\bar I_0) - a_0 - 2 + \sum_{i=1}^{N} ( F_f(y_i) - 1),
  \] 
  where $\tilde F_f(\vv_i)$ is the multiplicity of $\vv_i$ as a fixed point of
  $T_{u_i} f$ and $F_f(y_i)$ is the fixed-point multiplicity of $y_{i}$.
\end{lemma}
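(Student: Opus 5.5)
The plan is to apply Proposition~\ref{prop:num_classically_indifferent} to the component $J_0$ of $I_f$, and then translate its data into data attached to the finite tree $\bar I_0 = \bar J_0 \cap \Gamma$. Let $u'_1, \ldots, u'_{m}$ be the repelling type~II boundary points of $J_0$, and let $\ww_i$ be the direction at $u'_i$ meeting $J_0$. Since the classical fixed points in $\bar J_0$ are exactly $y_1, \ldots, y_N$, counted with multiplicity they number $N + \sum_{i=1}^N (F_f(y_i) - 1)$. Proposition~\ref{prop:num_classically_indifferent} then gives
\[
\sum_{i=1}^{m} \big(\tilde F_f(\ww_i) - 1\big) = m - 2 + N + \sum_{i=1}^N \big(F_f(y_i)-1\big).
\]
So it suffices to prove two claims: (a) $m = q$, with the $u'_i$ being precisely the $u_i$ and $\ww_i = \vv_i$; and (b) $N = \ell(\bar I_0) - a_0 - q$.

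First I would show $\bar J_0 \subset U$. Indeed, $J_0$ meets $\Gamma \subset U$ and is connected. To leave $U$ it would have to pass through some boundary point $x_j$, and $x_j$ is repelling, hence not id-indifferent. It could also not have $x_j$ as a boundary point: the connected set $J_0$ would then contain the arc from $\Gamma$ to $x_j$, which passes through the nonfixed point $x'_j$.

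Next I would analyze the part of $J_0$ lying off $\Gamma$. Any such part lies in a ball $B_x(\ww)^- \subset U$, where $x \in I_0$ and $\ww$ is a direction at $x$ not meeting $\Gamma$. Such a ball contains no $x'_i$, $y'_j$ or $z_i$ with $i \le e$, since those points lie in $\Gamma$. Arguing as in Lemma~\ref{lem:classical_valence}, the ball also contains no classical fixed point: a classical fixed point $y_j$ in it would force $y'_j$ or the branch toward $y_j$ to lie in the ball. It likewise contains no repelling fixed point, because every repelling fixed point of $U$ lies in $\Gamma_0$, and the relevant part of $\Gamma_0$ is contained in $\Gamma$. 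Hence every classical and every repelling boundary point of $J_0$ lies in the closure of $\Gamma$ (or is some $y_j$ whose edge to $\Gamma$ is near $y'_j$). This proves (a).

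For (b) I would enumerate the leaves of $\bar I_0$. A leaf is a boundary point of $I_0$ in $\Gamma$, and it is of one of three kinds:
\begin{itemize}
\item a type~II point of $\partial J_0$ that is not id-indifferent, which is either repelling (one of the $u_i$, which may include some $z_i \in V_1$) or additively indifferent (there are $a_0$ of these);
\item a leaf $y'_j$ of $\Gamma$ lying in $J_0$.
\end{itemize}
Here I would use that the arc $[y'_j, y_j)$ has zero crucial weight. Combined with the Second Indifference Lemma, this shows that $y'_j \in J_0$ if and only if $y_j \in \bar J_0$. By the previous step, every classical point of $\bar J_0$ arises in this way. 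Thus $\ell(\bar I_0) = q + a_0 + N$, which is (b), and substituting (a) and (b) into the displayed identity yields the lemma.

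The main obstacle is the bookkeeping in the second and third steps. One must exclude classical or repelling boundary points of $J_0$ hidden in branches off $\Gamma$, and one must check that no leaf of $\bar I_0$ is double-counted or missed. For example, one must rule out that a $u_i$ coincides with some $y'_j$, and that an $x'_i$ is a leaf of $\bar I_0$; the latter holds since $x'_i$ is nonfixed. I expect the delicate point to be the correspondence $y'_j \in I_0 \Leftrightarrow y_j \in \bar J_0$, which rests on the choice of $y'_j$ on the edge adjacent to $y_j$ and on the Second Indifference Lemma applied along that edge.
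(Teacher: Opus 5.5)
Your proposal is correct and follows the paper's own argument. You apply Proposition~\ref{prop:num_classically_indifferent} to $J_0$, identify its repelling boundary points with $u_1,\ldots,u_q$ (using $\bar J_0\subset U$ and the fact that $\Gamma$ contains every repelling fixed point of $U$), and count the leaves of $\bar I_0$ as $q+a_0+N$ via $y_j'\in \bar I_0 \Leftrightarrow y_j\in \bar J_0$. Your extra bookkeeping, for instance that no $u_i$ or additively indifferent leaf coincides with some $y_j'$ (which follows from $\wR(y_j')=0$ and Proposition~\ref{prop:crucial_wt_formula}), only makes explicit steps the paper states without comment.
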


\begin{proof}
  Recall that $J_0$ is the component of $I_f$ such that $J_0 \cap \Gamma =
  I_0$. Note that $J_0 \subset U$ since the boundary points of $U$ are
  repelling.  As $\Gamma$ contains all the repelling fixed points in $U$, $u_1,
  \ldots, u_q$ are the only repelling points contained in $\bar J_0$. Then
  Proposition~\ref{prop:num_classically_indifferent} applied to $J_0$ shows
  that
  \begin{equation}
    \label{eq:I0_classical_count}
     \sum_{i=1}^{N} F_f(y_i) = 2 - q + \sum_{i=1}^q \Big(\tilde F_f(\vv_i) - 1\Big). 
  \end{equation}
  
  As the classical fixed points in $\bar J_0$ are $y_1, \ldots, y_N$, it
  follows that $y_i' \in \bar I_0$ if and only if $1 \le i \le N$. Let $x$ be a
  leaf of $\bar I_0$. Then either $x$ is an id-indifferent fixed point, and
  hence $x = y_i'$ for some $i \le N$, or else $x$ is a type~II fixed point
  such that $\tilde F_f(\vv) > 1$ for the direction $\vv$ at $x$ that meets
  $I_0$. In the latter case $x$ is either repelling or additively
  indifferent. It follows that
  \begin{equation}
    \label{eq:I0_leaf_count}
     \ell(\bar I_0) = q + a_0 + N.
  \end{equation}

  Combining \eqref{eq:I0_classical_count} with~\eqref{eq:I0_leaf_count} gives
  the desired formula.
\end{proof}

\begin{lemma}
  \label{lem:tree_leaves}
  Let $\Gamma$ be a finite tree with $\ell \ge 2$ leaves. Then
  \[
  \sum_{\substack{x \in \Gamma \\ v_\Gamma(x) \ge 2}} \left(v_\Gamma(x) - 2\right) = \ell - 2.
  \]
\end{lemma}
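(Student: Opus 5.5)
The plan is to reduce the statement to the purely combinatorial fact that a finite tree with $V$ vertices has $V-1$ edges, and then apply the handshaking lemma. A finite tree here is the connected hull of finitely many points of $\BerkK$, so only finitely many points have valence different from~2. Call these the branch points. Choose a vertex set $V$ consisting of exactly these points. Every point with $v_\Gamma(x) \ge 3$ lies in $V$, and every point with $v_\Gamma(x) = 2$ contributes zero to the sum. So the left side equals the finite sum
\[
\sum_{x \in V,\ v_\Gamma(x)\ge 2} \big(v_\Gamma(x) - 2\big).
\]
This sum is therefore well defined.

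Since $\ell \ge 2$, $\Gamma$ is not a single point, so no vertex has valence~0. Thus every point of $V$ has valence~1 (a leaf) or valence at least~3. Let $|V|$ be the number of vertices. The pair $(V,E)$ is a combinatorial tree whose geometric realization is $\Gamma$, as recalled from Serre above. Hence $|E| = |V| - 1$, and the handshaking lemma gives $\sum_{x\in V} v_\Gamma(x) = 2|E| = 2|V| - 2$. Here the valence in $\BerkK$ agrees with the combinatorial degree: each tangent direction at $x$ meeting $\Gamma$ corresponds to exactly one edge incident to $x$. This holds because $V$ is finite and points between consecutive vertices have valence~2.

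Now split the sum over $V$ into leaves and the remaining vertices:
\[
\sum_{x\in V}\big(v_\Gamma(x)-2\big) = -2 .
\]
Each leaf contributes $-1$, so the left side equals
\[
-\ell + \sum_{x \in V,\ v_\Gamma(x)\ge 2}\big(v_\Gamma(x)-2\big).
\]
Rearranging gives the claimed identity $\ell - 2$.

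The only step requiring care is the identification of the Berkovich valence with the graph degree, together with the finiteness of the set of branch points. Both follow from the description of $\Gamma$ as the hull of finitely many points, which is a union of finitely many arcs. There is no genuine obstacle. An alternative would be induction on $\ell$: adding one leaf via an arc attached at a point increases the valence there by one and adds one leaf, so both sides increase by~1. I would keep that as a backup argument.
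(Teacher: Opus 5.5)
Your proposal is correct and is essentially the paper's proof: fix a vertex set, use $\#E = \#V - 1$ and the handshaking lemma to get $\sum_{x}(v_\Gamma(x)-2) = -2$, then remove the $\ell$ leaves, each contributing $-1$. Your remarks that the set of branch points is finite and that tangent-direction valence equals graph degree fill in details the paper leaves implicit.
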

    
\begin{proof}
  As $\ell \ge 2$, $\Gamma$ has at least~2 points. Therefore, $v_\Gamma(x) \geq
  1$ for any point $x \in \Gamma$. Let $V$ be a set of vertices for $\Gamma$,
  and let $E$ be the associated set of edges. Since $\Gamma$ is a tree, we find
  that $\# V - \#E = 1$. The handshaking lemma from graph theory shows that
  \begin{equation}
    \label{eq:v_minus_two}
     \sum_{x \in \Gamma} \left(v_\Gamma(x) - 2\right) = 2 \# E - 2 \#V = -2. 
  \end{equation}
  Thus, 
  \[
  \sum_{\substack{x \in \Gamma \\ v_\Gamma(x) \ge 2}} \left(v_\Gamma(x) - 2\right)
  = -2 - \sum_{\substack{x \in \Gamma \\ v_\Gamma(x) = 1}} \left(v_\Gamma(x) - 2\right)
  = -2 + \ell. \qedhere
  \]
\end{proof}


\subsection{Proof of Theorem E}
  \label{subsec:proof_of_Theorem_E}

We are now ready to count the crucial weight contained in a compressive domain
$U$. Recall that we write $d \ge 1$ for the degree of $f$.

First, suppose that $U = \BerkK$. Note that this includes the case $d = 1$
since such a map has no repelling fixed point. The statement of Theorem~E is
precisely Rumely's Weight Formula \cite[Thm.~1.3]{Faber_Patra_fixed_structure},
since $f$ has precisely $d+1$ classical fixed points when counted with
multiplicity.

For the remainder of the argument, we assume that $U$ has at least~1 boundary
point. We keep the notation of the previous section, including the finite trees
$\Gamma_0, \Gamma$ and its distinguished vertex set $V = V_1 \cup V_2 \cup V_3
\cup V_4$. We begin by determining the crucial weight of the points in this
vertex set.

Suppose first that $x \in V_1 \cup V_3$, which means $x$ is a type~II fixed
point that is not id-indifferent. Combining
Proposition~\ref{prop:crucial_wt_formula} with
Lemma~\ref{lem:classical_valence}, we see
\[
\wR(x) = v_\Gamma(x) - 2 +
\sum_{\substack{\vv \in T_x \\ T_x f(\vv) = \vv}} \Big(\tilde F_f(\vv) - 1\Big)
\qquad (x \in V_1 \cup V_3).
\]
Next suppose that $x \in V_2$. By construction, the points $x_i'$ and $y_j'$
have trivial crucial weight, and by definition so does an id-indifferent
point. So $\wR(x) = 0$.  Finally, suppose that $x \in V_4$. Since $x$ is
non-fixed and not a leaf of $\Gamma$, the definition of the crucial weight
and Lemma~\ref{lem:classical_valence} shows that
\[
\wR(x) = v_\Gamma(x) - 2 \qquad (x \in V_4).
\]
  
From Proposition~\ref{prop:crucial_wt_formula} and Second Indifference Lemma, a
type~II point $x$ has positive crucial weight only if either it has three or
more directions containing a classical fixed point or $x \in \partial I_f$. By
definition of $\Gamma_0$, any point in $U \smallsetminus \Gamma_0$ has a unique
direction containing classical fixed points. Thus, a point in $U \smallsetminus
\Gamma_0$ has positive crucial weight only if it is a boundary point of
$I_f$. However, any such boundary point with positive weight, is either a
repelling point or an additively indifferent point with a shearing
direction. Either way, they lie in $\Gamma_0$ by construction.  As $\Gamma_0
\smallsetminus \Gamma$ contains no point of positive crucial weight, all points
in $U$ with positive crucial weight is contained in $\Gamma$. From the
discussion above and Lemma~\ref{lem:classical_valence}, for any point $x$ in
$\Gamma$ with positive crucial weight, the number of directions at $x$
containing a classical fixed point is $v_\Gamma (x)$. By definition of $V$, all
such points are contained in $V$.  Now, combining the formulas in the preceding
paragraph, we can estimate the total crucial weight in $U$:
\begin{align*}
  \sum_{x \in U} \wR(x) =  \sum_{x \in V} \wR(x)  
  &= \sum_{x \in V_1 \cup V_3} \left(v_\Gamma(x) - 2 + \sum_{\substack{\vv \in T_x \\ \text{fixed}}} \Big(\tilde F_f(\vv) - 1\Big)\right) + \sum_{x \in V_4} \left(v_\Gamma(x) - 2\right) \\
  &= \sum_{x \in V} \left(v_\Gamma(x) - 2\right) - \sum_{x \in V_2} \left(v_{\Gamma}(x) -2\right) + \sum_{x \in V_1 \cup V_3} \sum_{\substack{\vv \in T_x \\ \text{fixed}}} \Big(\tilde F_f(\vv) - 1\Big).
\end{align*}

Write $S_1, S_2$, and $S_3$ for the three sums over vertices in this final
line.

By Equation~\eqref{eq:v_minus_two}, we find that $S_1 = -2$.  Write $I_1,
\ldots, I_k$ for the connected components of $I_\Gamma = I_f \cap
\Gamma$. Since $\Gamma$ is nontrivial, by Third Indifference Lemma each closure
$\bar I_j$ is a finite tree with $\ell(\bar I_j) \ge 2$ leaves. Suppose that
$y_1, \ldots, y_c$ are the classical fixed points in $U$. Since $V_2$ consists
of the id-indifferent points of $V$ and all leaves of $\Gamma$ other than
$z_{1},z_{2}, \ldots, z_{e}$, we find that
\begin{align*}
  S_2 = \sum_{x \in V_2} \left(v_\Gamma(x) - 2\right) &=
  \sum_{\substack{\text{leaves of $\Gamma$,}\\ \text{distinct from }z_{i}}} (-1)
  + \sum_{\substack{\text{$x$ id-indifferent} \\ v_{\Gamma}(x) \ge 2}} \left(v_\Gamma(x) - 2\right) \\
  &= -n -c  + \sum_{j=1}^k \Big(\ell(\bar I_j) - 2\Big),
\end{align*}
where the final equality follows from Lemma~\ref{lem:tree_leaves} applied to
the finite trees $\bar I_1, \ldots, \bar I_k$.

To treat the sum $S_3$, we begin by defining $D_{r}$ (respectively, $D_{a}$) to
be the set of all pairs $(x, \vv)$ such that $x$ is a repelling fixed point
(respectively, additively indifferent fixed point) contained in $\Gamma$ and
$\vv \in T_{x}$ with $\tilde F_f(\vv) > 1$. For any $(x, \vv) \in D_{r} \cup
D_{a}$, the point $x \in V_1 \cup V_3$ by
Lemma~\ref{lem:repelling_and_additive}. Thus,
\begin{equation}
  \label{eq:S3_First}
     S_{3} = \sum_{x \in V_1 \cup V_3} \sum_{\substack{\vv \in T_x
         \\ \text{fixed}}} \Big(\tilde F_f(\vv) - 1\Big) = \sum_{(x, \vv) \in
       D_{r}} \Big(\tilde F_f(\vv) - 1\Big) + \sum_{(x, \vv) \in D_{a}}
     \Big(\tilde F_f(\vv) - 1\Big).
\end{equation}

Each pair $(x,\vv) \in D_r \cup D_a$ corresponds to a leaf of $\bar I_j$ for
some $j \in \{1, \ldots, k\}$, and each leaf of some $\bar I_j$ that is a
repelling or additively indifferent fixed point corresponds to a unique pair
$(x,\vv)$. So we may partition the terms of the sums according to their
associated component $\bar I_j$.  For any component $I_{j}$ of $I_{\Gamma}$,
let $N_{j}$ be the number of distinct classical fixed points contained in $\bar
J_j$, where $J_j$ is the component of $I_f$ such that $J_j \cap \Gamma =
I_j$. Write $y_{j,1}, \ldots, y_{j,N_j}$ for the distinct classical fixed
points in $\partial I_j$. Writing $a_j$ for the number of additively
indifferent fixed points in the boundary of $I_j$, Lemma~\ref{lem:S3_helper}
gives
  
  \begin{align*}
    S_3 &= \sum_{j=1}^k \sum_{(x,\vv) \in D_r \cap \partial I_j}  \Big(\tilde F_f(\vv) - 1\Big)
    + \sum_{j=1}^k \sum_{(x,\vv) \in D_a \cap \partial I_j}  \Big(\tilde F_f(\vv) - 1\Big) \\
    &= \sum_{j=1}^k \big(\ell(\bar I_j) - a_j - 2 + \sum_{i=1}^{N_j} ( F_f(y_{j,i}) - 1)\Big) + \sum_{j=1}^k a_j \\
    &= \sum_{j=1}^k \Big(\ell(\bar I_j) -2\Big) + \sum_{j=1}^{k} \sum_{i=1}^{N_j} ( F_f(y_{j,i}) - 1)\\
    &= \sum_{j=1}^k \Big(\ell(\bar I_j) -2\Big) + \sum_{i=1}^{c} ( F_f(y_i) - 1).
  \end{align*} 

The last equality follows from the fact that any classical fixed point in $U$
is either contained in $\bar I_{f}$ or its fixed point multiplicity is $1$. For
any classical fixed point $y$ with fixed point multiplicity $1$, the term
$F_f(y) - 1$ does not contribute to the sum.
  
To summarize, we have shown that
\begin{align*}
  S_1 &= -2 \\
  S_2 &= -n - c + \sum_{j=1}^k \left(\ell(\bar I_j) - 2\right) \\
  S_3 & = \sum_{j=1}^k \Big(\ell(\bar I_j) -2\Big) + \sum_{i=1}^{c} ( F_f(y_i) - 1).
\end{align*}
Therefore, 
\[
\sum_{x \in U} \wR(x) = S_1 - S_2 + S_3 = n + c - 2 + \sum_{i=1}^{c} ( F_f(y_i) - 1) = n + \bar c - 2. \qedhere
\]


\bibliographystyle{amsalpha}
\bibliography{fixed_locus}

@book{Baker-Rumely_BerkBook_2010,
  title = {Potential theory and dynamics on the {B}erkovich projective line},
  publisher = {American Mathematical Society},
  year = {2010},
  author = {Baker, Matthew and Rumely, Robert},
  volume = {159},
  pages = {xxxiv+428},
  series = {Mathematical Surveys and Monographs},
  address = {Providence, RI},
  isbn = {978-0-8218-4924-8},
  mrclass = {32H50 (14G20 14G22 20F65 31C15 31C45 37Bxx 37Pxx)},
  mrnumber = {MR2599526}
}

@ARTICLE{Chambert-Loir_Measures_2005,
  author = {Chambert-Loir, Antoine},
  title = {Mesures et \'equidistribution sur les espaces de {B}erkovich},
  journal = {J. Reine Angew. Math.},
  year = {2006},
  volume = {595},
  pages = {215--235},
  coden = {JRMAA8},
  fjournal = {Journal f\"ur die Reine und Angewandte Mathematik},
  issn = {0075-4102},
  mrclass = {14Gxx},
  mrnumber = {MR2244803}
}

@UNPUBLISHED{Faber_Patra_fixed_structure,
  author = {Faber, Xander and Patra, Niladri},
  title = {Structure of the Components of the Fixed Locus of a Self-Map of the
                  {B}erkovich Line},
  note = {preprint},
  year = {2026},
}

@ARTICLE{Favre_Rivera-Letelier_Ergodic_2010,
  author = {Favre, Charles and Rivera-Letelier, Juan},
  title = {Th\'eorie ergodique des fractions rationnelles sur un corps ultram\'etrique},
  journal = {Proc. Lond. Math. Soc. (3)},
  year = {2010},
  volume = {100},
  pages = {116--154},
  number = {1},
  doi = {10.1112/plms/pdp022},
  fjournal = {Proceedings of the London Mathematical Society. Third Series},
  issn = {0024-6115},
  mrclass = {37Pxx (11S85)},
  mrnumber = {2578470},
  url = {http://dx.doi.org/10.1112/plms/pdp022}
}

@book{Hatcher_AT,
  author    = {Hatcher, Allen},
  title     = {Algebraic Topology},
  publisher = {Cambridge University Press},
  address   = {Cambridge},
  year      = {2002},
}

@article{Jacobs_crucial_equidistribution,
  author  = {Jacobs, Kenneth},
  title   = {Equidistribution of the crucial measures in non-{A}rchimedean dynamics},
  journal = {Journal of Number Theory},
  volume  = {180},
  pages   = {86--138},
  year    = {2017},
  publisher = {Elsevier},
  issn    = {0022-314X},
  doi     = {10.1016/j.jnt.2017.02.018}
}

@unpublished{Patra_fixed,
  author = {Patra, Niladri},
  title = {Connected components of Berkovich fixed point locus}, 
  year = {2025},
  note = {arXiv:1104:0320 [math.DS], preprint}
}

@ARTICLE{Rumely_new_equivariant,
  author = {Rumely, Robert},
  title = {A New Equivariant in Nonarchimedean Dynamics},
  journal = {Algebra Number Theory},
  year = {2017},
  volume = {11},
  issue = {4},
  pages = {841--844}
}

@book{Serre_Trees_2003,
  author     = {Serre, Jean-Pierre},
  title      = {Trees},
  series     = {Springer Monographs in Mathematics},
  translator = {Stilwell, John},
  publisher  = {Springer-Verlag},
  address    = {Berlin},
  year       = {2003},
  edition    = {Corrected 2nd printing},
  pages      = {ix + 142},
  isbn       = {978-3-540-44237-0},
  doi        = {10.1007/978-3-642-61856-7},
  note       = {Translation of \emph{Arbres, amalgames, $SL_2$},
                Soc.\ Math.\ France, 1977}
}

\end{document}